\DeclareMathAlphabet{\mathcalligra}{T1}{calligra}{m}{n}
\newcommand*{\DashedArrow}[1][]{\mathbin{\tikz [baseline=-0.25ex,-latex, dashed,#1] \draw [#1] (0pt,0.5ex) -- (1.3em,0.5ex);}}%
\definecolor{LightGrey}{rgb}{.85,.85,.85}
\definecolor{DarkGrey}{rgb}{.5,.5,.5}
\definecolor{Blue}{rgb}{.0,.0,0.9}
\definecolor{LightBlue1}{rgb}{.2,.4,0.9}
\definecolor{LightBlue2}{rgb}{.3,.5,0.9}
\definecolor{LightBlue3}{rgb}{.4,.6,0.9}
\definecolor{LightBlue4}{rgb}{.5,.7,.9}
\definecolor{LightBlue5}{rgb}{.6,.8,.9}
\definecolor{LightBlue6}{rgb}{.7,.9,.9}
\definecolor{Red}{rgb}{.9,.0,.0}
\definecolor{LightRed1}{rgb}{0.9,.2,.4}
\definecolor{LightRed2}{rgb}{0.9,.3,.5}
\definecolor{LightRed3}{rgb}{0.9,.4,.6}
\definecolor{LightRed4}{rgb}{.9,.5,.7}
\definecolor{LightRed5}{rgb}{.9,.6,.8}
\definecolor{LightRed6}{rgb}{.9,.7,.9}  
\def\ttimes{{.}}
\def\FAIL{\mathbf{FAIL}}
\def\thetachar{{\vartheta}}
\def\dotu{{\dot{u}}}
\def\dotx{{\dot{x}}}
\def\bw{{\boldsymbol{w}}}
\def\OO{{\mathfrak{O}}}
\def\KK{{\mathbf K}}
\def\barKK{{ {\bar \KK}}}
\definecolor{Grey}{rgb}{.5,.5,.5}
\definecolor{Blue}{rgb}{.0,.0,0.9}
\definecolor{LightBlue1}{rgb}{.2,.4,0.9}
\definecolor{LightBlue2}{rgb}{.3,.5,0.9}
\definecolor{LightBlue3}{rgb}{.4,.6,0.9}
\definecolor{LightBlue4}{rgb}{.5,.7,.9}
\definecolor{LightBlue5}{rgb}{.6,.8,.9}
\definecolor{LightBlue6}{rgb}{.7,.9,.9}
\definecolor{Red}{rgb}{.9,.0,.0}
\definecolor{LightRed1}{rgb}{0.9,.2,.4}
\definecolor{LightRed2}{rgb}{0.9,.3,.5}
\definecolor{LightRed3}{rgb}{0.9,.4,.6}
\definecolor{LightRed4}{rgb}{.9,.5,.7}
\definecolor{LightRed5}{rgb}{.9,.6,.8}
\definecolor{LightRed6}{rgb}{.9,.7,.9}
\newcounter{noalgo}[section]
\newdimen\indentalgo
\newdimen\indentalgodec\indentalgo=0.0mm\indentalgodec=10mm
\newcommand{\If}{\advance\indentalgo by \indentalgodec {\bf if }}
\newcommand{\For}{\global\advance\indentalgo by \indentalgodec {\bf for }}
\newcommand{\Endindent}{\global\advance\indentalgo by -\indentalgodec}
\newdimen\decalage \decalage=0.5cm
\newcounter{algo} \setcounter{algo}{0}
\newcommand{\PP}{\mathbf P}
\def\<<{\leavevmode
  \raise0.28ex\hbox{$\scriptscriptstyle\langle\!\langle$}\nobreak
  \hskip -.6pt plus.3pt minus.2pt\,}
\def\>>{\,\nobreak\hskip -.6pt plus.3pt minus.2pt
  \raise0.28ex\hbox{$\scriptscriptstyle\rangle\!\rangle$}}
\def\Hom{\mathop{\rm{Hom}}\nolimits }
\def\Pic{\mathop{\rm{Pic}}\nolimits }
\def\Spec{\mathop{\rm{Spec}}\nolimits }
\def\bS{{\bf S  }}
\def\bT{{\bf T  }}
\def\bP{{\bf P  }}
\def\bQ{{\bf Q  }}
\def\bR{{\bf R  }}
\def\bs{{\bf s  }}
\def\bp{{\bf p  }}
\def\bq{{\bf q  }}
\def\br{{\bf r  }}
\def\bS{{\bf S  }}
\def\Gm{{{\bf  G }_m}}
\def\LL{{\mathbf L}}
\def\GG{{\mathbf G}}
\def\aa{{\mathbf a}}
\def\GG{{\mathbf G}}
\def\ZZ{{\mathbf Z}}
\def\ugot{{\mathfrak u}}
\def\vgot{{\mathfrak v}}
\def\egot{{\mathfrak e}}
\def\cG{{\mathcal G}}
\def\cL{{\mathcal L}}
\def\cM{{\mathcal M}}
\def\cO{{\mathcal O}}
\newtheorem{lemma}{Lemma}
\newtheorem{proposition}{Proposition}
\newtheorem{theorem}{Theorem}
\providecommand{\myproofname}{Proof}
\begin{document}

\begin{abstract}
We show how to efficiently evaluate functions on Jacobian varieties and
their quotients. We deduce an  algorithm to compute $(l,l)$ isogenies
between Jacobians of genus two curves in  quasi-linear time  
in the degree $l^{2}$.
\end{abstract}

\title{Computing functions on Jacobians and their quotients}

\author{Jean-Marc Couveignes}
\address{Jean-Marc Couveignes,   Univ. Bordeaux,
Bordeaux INP, CNRS, IMB, UMR 5251, F-33400 Talence, France.}
\address{Jean-Marc Couveignes, INRIA, LFANT, LIRIMA, F-33400 Talence, France.}
\email{Jean-Marc.Couveignes@u-bordeaux.fr}

\author{Tony Ezome}
\address{Tony Ezome, Universit{\'e} des Sciences et Techniques de Masuku,
Facult{\'e} des Sciences, D{\'e}partement de math{\'e}matiques et informatique,
BP 943 Franceville, Gabon.}%
\address{Tony Ezome, INRIA, LIRIMA}
\email{latonyo2000@yahoo.fr}

\date{\today}

\maketitle
\setcounter{tocdepth}{2} 
\tableofcontents

\section{Introduction}

We consider the problem of computing the quotient of the Jacobian
variety $J$ of a curve $C$ by a maximal isotropic subgroup $V$ 
in its $l$-torsion for $l$ an odd prime integer. The genus one case   has been explorated a lot  since Vélu \cite{velu1,velu2}. A recent bibliography
can be found in \cite{bostan}.
In this work we first study this problem in general, showing how
to quickly design and evaluate standard functions (including Theta functions)
 on the quotient $J/V$. We then turn to the specific case when the dimension $g$ of
$J$ equals two. In that case, the quotient is, at least generically,  the Jacobian of another curve $D$. The quotient isogeny 
induces a map from $C$ into the Jacobian of $D$
that can then be described in a compact form: a few  rational 
fractions of degree $O(l)$.
We explain how to compute $D$ and the
map from  $C$ into the Jacobian of $D$ in  quasi-linear time in the degree $\# V = l^2$.

{\bf Plan} In Section~\ref{sec:fj} we bound the complexity of evaluating standard functions on Jacobians, including Weil functions
and algebraic Theta functions. We deduce  in Section~\ref{sec:bl} a bound for the complexity of computing a basis
of sections for the  bundle associated with a multiple of the natural polarization of $J$. We recall the algebraic
definition of canonical Theta functions in Section~\ref{sec:canotheta} and bound the  complexity of evaluating
such a function at a given point in $J$. Section~\ref{sec:quo} bounds the complexity of evaluating
functions on the quotient of $J$ by  a maximal isotropic subgroup $V$ in  $J[l]$ when $l$ is
an odd prime different from the characteristic of $\KK$. Specific algorithms for genus two curves are given in Section~\ref{sec:g2}.
A complete example is treated in Section~\ref{sec:exa}.

{\bf Context} The algorithmic aspect of isogenies was explorated by Vélu \cite{velu1,velu2} in the context of elliptic
curves. He exhibits  bases of linear spaces made of  Weil functions, then finds invariant functions
using traces. Vélu considers the problem of computing the quotient variety once given some finite subgroup.
The  problem of computing (subgroups of) torsion points is independent and was solved in a somewhat optimal
way by  Elkies \cite{elk}  in the genus one case, using modular equations. It is unlikely that modular equations
will be of any use to accelerate  the computation of torsion points for higher genera, since they all are far too big.
Torsion points may be computed by brute force (torsion polynomials), using the Zeta function when it is known \cite{Couveignesmodp},
or because they come naturally as part of the input (modular curves). We shall not consider this problem and will concentrate on
the computation of the isogeny, once given its kernel. The genus one case  has been surveyed by Schoof \cite{schoof} and Lercier-Morain \cite{LM98}.
The genus two case was studied by Dolgachev and Lehavi \cite{dolga}, and Smith
\cite{smith},  who provide a very elegant geometric  description. However,
the complexity of the resulting algorithm is not given (and is not 
quasi-linear in the degree  anyway). Lubicz and Robert \cite{lr,lr2} provide general
methods for quotienting abelian varieties (not necessarily Jacobians) by maximal isotropic subgroups in the $l$-torsion. Their
method has quasi-linear  complexity $l^{g(1+o(1))}$ when $l$ is a sum of two squares. Otherwise it has complexity $l^{g(2+o(1))}$.
The case of dimension two is treated by Cosset and Robert \cite{cos}. They reach
complexity $l^{2+o(1)}$ when $l$ is the sum of two squares and  $l^{4+o(1)}$ otherwise.
However, the input and mainly the output of these methods is quite different from ours. In the dimension two case,  we can, and must provide a curve $D$ of which $J/V$
is the Jacobian, and an explicit  map  from $C$ into the symmetric square of $D$. We achieve this goal in quasi-linear  time $l^{2+o(1)}$
for every odd prime $l\not = p$.

{\bf Aknowledgements} We thank Damien Robert for  his comments on an early version of this work and Qinq Liu for interesting discussions about holomorphic
differentials.
Tony Ezome is supported by the Simons Foundation via the PRMAIS project.
Jean-Marc Couveignes  is supported by the 
``Agence Nationale de la Recherche'' (project PEACE) and by the cluster of excellence CPU (Numerical certification and reliability).
Experiments presented in this paper were carried out using PARI/GP \cite{PARI2}
and the 
PLAFRIM experimental testbed, being developed under the Inria PlaFRIM 
development action with support from LABRI and IMB and other entities: Conseil Régional d'Aquitaine, Université de Bordeaux and CNRS.

\section{Functions on  Jacobians}\label{sec:fj}

Constructing functions on abelian varieties using  zero-cycles and  divisors is classical \cite{weil1,weil2}. 
In this section, we bound the complexity of evaluating such functions in the special case of Jacobian varieties.
Possible references for the theory of Jacobian varieties are \cite{weil1,langAV,MilneAV,arbarello}.   

Section~\ref{sec:nono} sets some notation about Jacobian varieties.
Section~\ref{sec:alpha} is concerned with  a special case
of Eta functions : those associated to a function on the curve
itself. These functions can be  easily evaluated. 
Section~\ref{sec:aco} recalls well known but important algorithmic
results about curves and  Jacobians.
These algorithmic considerations are of particular interest when
the base field $\KK$ is finite. Bounds on the number of points
on varieties are useful in this context. We recall in 
Section~\ref{sec:diese} a  simple estimate that will suffice
for our purpose.
We provide in Section~\ref{sec:deter} a formula for the divisor
of certain functions on $J$ defined using determinants. We deduce
an expression for Eta functions as combinations of these determinants.
The resulting  algorithm for evaluating  Eta functions
is detailed in Section~\ref{sec:evaleta}.

\subsection{Notation}\label{sec:nono}

We let $\KK$ be a field. Let  $\barKK$ be an algebraic closure of $\KK$. 
If $X$ is a $\KK$-scheme and  if $\LL$ is  an extension of  $\KK$,  we denote by $X_\LL$ the base change
$X\otimes_\KK\LL$ and by $X(\LL)$ the set of $\LL$-points on it.
Let $C$ be a projective, smooth, absolutely integral
curve over $\KK$.  Let $g$ be the genus of $C$. We assume that $g\ge 2$  and we denote by    
$\Pic(C)$ the Picard scheme  of $C$.
For every integer $d$ we denote by 
$\Pic^d(C)$ the component of $\Pic(C)$ representing linear classes of  divisors of degree $d$.
In particular, $J=\Pic^0(C)$ is the Jacobian variety of $C$.
By definition of the Picard scheme,  $\LL$-points on $\Pic(C)$ parameterize linear equivalence classes of divisors   on $C_\LL$. 
We shall make no difference between linear classes of divisors and points  on the Picard scheme. 
The {\it canonical class} on $C$ is denoted $\omega$. It  is represented
by a $\KK$-point on $\Pic^{2g-2}(C)$ which we call $\omega$ also.
If $D$ is a divisor on  $C_\LL$ we denote by $\iota(D)$ 
 its 
linear equivalence class, and 
the corresponding $\LL$-point on $\Pic(C)$.
Let  $u$ be an $\LL$-point on  $\Pic(C)$. We call
\[t_u : \Pic(C)_\LL\rightarrow \Pic(C)_\LL\]  the  translation     by $u$. 
If now  $D$ is a divisor on $\Pic(C)_\LL$ we denote by \[D_u = t_u(D)\]  the translation  of $D$  by $u$.
We call $W \subset \Pic^{g-1}(C)$ the 
algebraic set representing  classes of effective divisors of degree $g-1$.
The pullback $[-1]^*  W \subset  \Pic^{1-g}(C)$
is equal to the translate of $W$ by $-\omega$. We write
\[[-1]^*  W = W_{-\omega}.\]
If there exists a $\KK$-rational point $\theta$ in $\Pic^{g-1}(C)$
such that \[2\theta  = \omega,\]
then we  say that
$\theta$ is a {\it Theta characteristic}. See \cite[Appendix B, \S 3]{arbarello}.
Two such  Theta characteristics differ by a $2$-torsion point in $J$. The
translate  $W_{-\theta}$ is a divisor on  $J$. One has 
\begin{equation}\label{eq:symw}
[-1]^* W_{-\theta}=W_{-\theta}.
\end{equation}
The divisor $W_{-\theta}$ is said to be {\it symmetric}.
We  assume that  we are given   a $\KK$-rational point $O$ on $C$, and
 denote by \[o = \iota (O)\] its linear equivalence class.
This is a $\KK$-point on $\Pic^1(C)$. The   translate $W_{-(g-1)o}$ is a divisor on $J$.
We set \[\kappa  = \omega-2(g-1)o \in J(\KK).\]
We have \[[-1]^*W_{-(g-1)o}=W_{-(g-1)o-\kappa}.\]
We set
\[\thetachar = \theta - (g-1)o \in  J(\KK)\] and check that \[2\thetachar = \kappa.\]
Given  $D$ a divisor on $C$ we write $L(D)$ for the linear space 
$H^0(C,\cO_C(D))$  and 
\[\ell(D)  = \dim ( H^0(C,\cO_C(D)).\]

Let $\ugot = \sum_{1\le i\le I} e_i[u_i]$ be a zero-cycle on $J_\barKK$. 
So $(e_1, e_2, \ldots, e_I) \in \ZZ^I$ and $(u_1, \ldots, u_I) \in J(\barKK)^I$. We set 
\[s(\ugot) = \sum_{1\le i\le I}e_iu_i  \in J (\barKK) \text{ and } \deg(\ugot) = \sum_{1\le i\le I}e_i \in \ZZ.\]
Let $D$ be a divisor on $J_\barKK$.  
The divisor $\sum_{1\le i\le I} e_i D_{u_i}-D_{s(\ugot)}-(\deg(\ugot)-1)D$ is principal.
Let $y$ be a point in $J(\barKK)$ not in the support of this divisor.
Call $\eta_D[\ugot,y]$
the unique  function on $J_\barKK$ having divisor
\[ (\eta_D[\ugot ,y]) = \sum_{1\le i\le I}e_iD_{u_i} -D_{s(\ugot)}-(\deg(\ugot)-1)D\]
and such that 
\[\eta_D[\ugot ,y](y)=1.\]
This definition is  additive in the sense that
\begin{equation}\label{eq:addit}
\eta_D[\ugot+\vgot ,y ] = \eta_D[\ugot ,y] \ttimes \eta_D[\vgot ,y] \ttimes \eta_D[[s(\ugot)]+[s(\vgot)],y]\end{equation}
whenever it makes sense.  If   $D$, $y$,  and $\ugot$ are defined over $\KK$
then $\eta_D \in \KK(J)$.
We write \[\eta_D[\ugot] \in \KK  (J)^*/\KK^*\] when we  consider an Eta function up to a multiplicative scalar.

Equation (\ref{eq:addit})  allows us to evaluate Eta functions by
pieces: we first treat a few special cases and then explain how to combine them to efficiently evaluate any Eta function.
We shall see in Sections~\ref{sec:canotheta} and~\ref{sec:quo} that many interresting functions on $J$ can be expressed as combinations of Eta functions.
In this paper we shall  be firstly  interrested in the special case  $D  = W_{-(g-1)o}$. We  omit the subscript in that case, and write
$\eta[\ugot ,y]$ rather than $\eta_{W_{-(g-1)o}}[\ugot ,y]$.

\subsection{An easy special case}\label{sec:alpha}

Let $f$ be a non-zero  function  in $\KK(C)$.
Following \cite{fd}  one can  naturally  associate  to $f$ a function $\alpha[f]$ in $\KK(J)$ in the following way.
We assume  
that $f$ has degree $d$ and divisor 
\[(f)=\sum_{1\le i\le d} Z_i-\sum_{1\le i\le d} P_i.\]
We call  $z_i=\iota(Z_i)$ (resp.    $p_i = \iota (P_i)$)  the $\barKK$-points in $\Pic^1(C)$
representing the linear equivalence classes of the $Z_i$ (resp.  the $P_i$). 
Let $x$ be a  point in $J(\KK)$ such that  
$x\not\in W_{p_i-go}$ for every $1\le i\le d$.
In particular, $\ell(x+gO)=1$. Indeed every special divisor class of degree $g$ belongs to $W_{\iota(P)}$ for every point $P$ on $C$ since 
the corresponding linear series has positive projective dimension and
we can find a divisor in it containing any given $P$.
Let $D_x$ be the unique effective
divisor of
degree $g$ on $C$ such that $D_x-gO$ belongs to the class $x$.  Write $D_x=D_1+D_2+\dots +D_g$ and set
\begin{equation}\label{eq:alphadef}
\alpha[f](x)=f(D_1)\ttimes f(D_2)\ttimes \, \dots \, \ttimes f(D_g).
\end{equation} The divisor of $\alpha[f]$ is
\begin{equation*}%\label{eq:F}
(\alpha[f])=\sum_{1\le i\le d} W_{z_i-go}-\sum_{1\le i\le d} W_{p_i-go}.
\end{equation*}
Let $y$ be a point in  $J(\KK)$ such that $y\not\in W_{p_i-go}$ and
$y \not\in W_{z_i-go}$ for every $1\le i\le d$.
 Then
\[\alpha[f](x)/\alpha[f](y)=\eta[\sum_{1\le i\le d} [z_i-o]-\sum_{1\le i\le d} [p_i-o]  ,y](x).\]
This provides  an algorithm to evaluate $\eta[\ugot,y]$ 
in the special case when $\ugot$ is a zero-cycle on $J$ with support contained
in $t_{-o}(\iota(C)) \subset J$. 
\subsection{Algorithmic considerations}\label{sec:aco}

Having described in Section~\ref{sec:alpha}  a first method to evaluate Eta functions in some special case, we bound  the complexity
of this method. We take this opportunity to set some notation and convention.

\subsubsection{Convention}

In this text, the
notation $\OO$ stands for a positive absolute constant.
Any statement containing
this symbol becomes true if the symbol is replaced in every occurrence by some 
large enough real number. Similarly,
the notation $\egot(x)$ stands for a real  function of the real
parameter
$x$ alone, belonging to the class $o(1)$.

\subsubsection{Operations in $\KK$} The time needed for one operation
in $\KK$ is  a convenient unit of time.
Let  $\LL$ be  a monogene finite   $\KK$-algebra of  degree $d$.
We will assume that $\LL$ is given as a quotient $\KK[x]/f(x)$ where $f(x)$ is 
a polynomial
in $\KK[x]$.
Every operation in $\LL$ requires  $d^{1+\egot(d)}$
operations in $\KK$. 
When $\KK$ is  a finite field with cardinality $q$, every operation
in $\KK$ requires $(\log q)^{1+\egot(q)}$ elementary operations.

\subsubsection{Operations in $J(\KK)$} We assume that $C$ is given in a reasonable way: for example a plane model with degree polynomial
in the genus $g$. 
Elements in $J(\KK)$ are 
classically represented 
by divisors on $C$. We can also use 
Makdisi's representation \cite{makdisi}  
which is more efficient. For our purpose it will be enough
to know  that one operation in $J(\KK)$ requires 
$g^\OO$ operations  in $\KK$ that is 
$g^\OO\ttimes(\log q)^{1+\egot(q)}$ elementary operations when $\KK$
is a field with $q$ elements.
Given two effective divisors $D$ and $E$ with 
degrees $d$ and $e$ respectively, 
we  are able to compute  a basis of $L(D-E)$ at the expense
of $(gde)^\OO$  operations in $\KK$.   The Brill-Noether algorithm reduces all theses algorithmic
 problems  to the analysis
of the singularities of the given curve. This is classically achieved by blowing up
or using series expansions, but none of these method is fully satisfactory: The complexity of blowing up
is not well understood in the worst cases; and computing series expansions is only possible  when the characteristic
of $\KK$ is zero or large enough. Work by Hess \cite{Hess}, using general  normalization algorithms, provides a
satisfactory algorithm in general. Possible references for these  algorithms  are  Hess \cite{Hess}, Makdisi \cite{makdisi},
Diem \cite{Diem},   or the quick account at the  beginning of  \cite{Couveignesmodp}.

\subsubsection{Evaluating $\alpha[f]$}\label{sec:evalal}

We are given a function $f$ in $\KK(C)$. We are given a  class  $x$ in $J(\KK)$,
represented  by $D_x-gO$ where $D_x$ is effective
with degree $g$. We may see $D_x$ as a zero-dimensional  scheme over $\KK$, and call $\KK[D_x]$ the associated affine
$\KK$-algebra. We assume that $D_x$ does not meet the poles of $f$. Let $P$ be the generic point on $D_x$. Then $f(P)$ belongs to $\KK[D_x]$ and its  norm over
$\KK$ is $\alpha[f](x)$ according to the  definition given in Equation~(\ref{eq:alphadef}). Thus we can  compute  $\alpha[f](x)$ at the expense of   $(gd)^\OO$ operations
in $\KK$, where $g$ is the genus of $C$ and $d$ is the degree of $f$.

\subsection{Number of points on Theta divisors}\label{sec:diese}

We  recall a rough but very general and  convenient upper  bound 
for the number of points in algebraic sets over finite fields.
This bound  was
proved in  \cite[Proposition 12.1]{LG}  by Lachaud and Ghorpade. 
We shall  use it  to estimate the probability of success
of some of the algorithms  presented in this paper.
 
\begin{lemma}[Rough bound for the number of points]\label{lem:LG}
Let $\KK$ be a field with $q$ elements.
Let $X$ be a projective algebraic set  over $\KK$. Let $n$ be the maximum
of the dimensions of the $\KK$-irreducible components of $X$. Let $d$
be the sum of the degrees of the $\KK$-irreducible components of $X$.
Then \[\left|X(\KK)\right|\le d(q^n+q^{n-1}+\dots+q+1).\]
\end{lemma}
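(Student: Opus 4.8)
The plan is to reproduce the classical double-counting argument of Serre, in the form generalized by Lachaud and Ghorpade, proceeding by induction on the maximal dimension $n$. Throughout I abbreviate $\pi_m=\#\PP^m(\Fq)=q^m+q^{m-1}+\dots+q+1$, so that the target inequality reads $|X(\Fq)|\le d\,\pi_n$, and I recall that a hyperplane through a fixed point of $\PP^N$ corresponds to a point of a $\PP^{N-1}$ in the dual space.

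First I would perform two reductions. Since $|X(\Fq)|$ is at most the sum of $|X_i(\Fq)|$ over the irreducible components $X_i$ of $X$, since the bound $d\,\pi_n$ is additive in $d$, and since $\pi_m$ is increasing in $m$, it suffices to treat an irreducible $X$ of dimension exactly $n$. Next, the linear span $L$ of $X$ in $\PP^N$ is cut out by the linear forms in the homogeneous ideal of $X$, hence is a linear subspace defined over $\Fq$; replacing $\PP^N$ by $L$, I may assume $X$ is non-degenerate. If at this stage $n=N$, then $X=\PP^N$, $d=1$, and the bound is an equality, so from now on $0\le n<N$.

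Then comes the induction on $n$. The base case $n=0$ is immediate: a $0$-dimensional reduced scheme of degree $d$ has at most $d$ points over any field, and $\pi_0=1$. For $n\ge 1$: because $X$ is non-degenerate, no $\Fq$-rational hyperplane $H$ contains $X$, so $X\cap H$ is a proper closed subscheme of the irreducible variety $X$; hence $\dim(X\cap H)\le n-1$, and B\'ezout's theorem bounds the sum of the degrees of the components of $X\cap H$ by $\deg X=d$. The induction hypothesis then gives $|(X\cap H)(\Fq)|\le d\,\pi_{n-1}$ for every such $H$. Counting the incidences $\{(P,H):P\in X(\Fq),\ P\in H\}$ in two ways yields
\[|X(\Fq)|\cdot\pi_{N-1}\ =\ \sum_{H}\bigl|(X\cap H)(\Fq)\bigr|\ \le\ \pi_N\cdot d\,\pi_{n-1}.\]
To conclude I would check $\pi_N\,\pi_{n-1}\le\pi_n\,\pi_{N-1}$: clearing the common denominator $(q-1)^2$, this is $(q^{N+1}-1)(q^n-1)\le(q^{n+1}-1)(q^N-1)$, which expands and simplifies to $q^n(q-1)\le q^N(q-1)$, i.e. to $n\le N$ — true by assumption. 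Dividing through by $\pi_{N-1}$ gives $|X(\Fq)|\le d\,\pi_n$, closing the induction.

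I expect the main obstacles to be, first, getting the reduction steps exactly right so that the general (reducible, possibly non-equidimensional) case really does follow from the irreducible non-degenerate one — including the observation that the linear span is defined over $\Fq$ and that the degree is unchanged under the linear re-embedding; and second, the point that intersecting with a rational hyperplane genuinely lowers the dimension, which is precisely where non-degeneracy is used. The incidence count and the final numerical inequality $n\le N$ are then routine.
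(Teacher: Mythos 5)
Your proof is correct. Note that the paper does not prove this lemma at all: it simply cites \cite[Proposition 12.1]{LG}, and your argument is essentially the standard one behind that reference --- the Serre--Lachaud double count of point--hyperplane incidences, run as an induction on dimension after reducing to an irreducible, non-degenerate $X$. The only point worth making explicit is that the components here are $\KK$-irreducible rather than geometrically irreducible; this is harmless, since the degree of a $\KK$-irreducible component is the sum of the degrees of its conjugate geometric components, so the refined B\'ezout bound $\sum_i \deg\left(\left(X\cap H\right)_i\right)\le \deg X$ and the strict drop in dimension of $X\cap H$ both go through, and your final inequality $\pi_N\,\pi_{n-1}\le \pi_n\,\pi_{N-1}$ checks out.
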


Let $\KK$ be a finite field with cardinality $q$
and $C$ a curve over $\KK$ and $O$
a $\KK$\!-rational point on $C$ and 
$J$ the Jacobian of $C$. We assume that the genus $g$ 
of $C$ is at least $2$. Set $\iota(O)=o \in \Pic^1(C)$.  Recall that  $W_{-g(o-1)}$ is  the algebraic
subset of $J$ consisting of all classes $\iota(A-(g-1)O)$ where $A$
is an effective divisor with degree $g-1$.
Let $D$ be an algebraic subset of codimension one in $J$.
%We also write $D$ for the corresponding subscheme of $J$.
We assume  that $D$ is algebraically
equivalent to $kW_{-(g-1)o}$.
Set $l=\max(3,k)$. The divisor  $E=D+(l-k)W_{-(g-1)o}$
is algebraically equivalent 
to $lW_{-(g-1)o}$. After base change to $\bar \KK$ it becomes
linearly equivalent to a translate of $lW_{-(g-1)o}$. Since every translate of 
$W_{-(g-1)o}$
is ample \cite[Chapter II, \S 6]{mumford}  and $l\ge 3$ we deduce 
\cite[Chapter III, \S 17]{mumford} that
$E$ is very ample. We now apply Lemma~\ref{lem:LG} to the hyperplane
section $E$. Its dimension is $n=g-1$ and  its degree $d$
is \[E^{g}=l^{g}\left(W_{-(g-1)o}\right)^{g}=l^g .  g!\] 
according to Poincaré's formula \cite[Chapter I, \S 5]{arbarello}.
So $|D(\KK)|\le |E(\KK)|\le l^g\ttimes (g!)\ttimes
(q^{g-1}+q^{g-2}+\dots+q+1)\le g\ttimes (g !) \ttimes l^g\ttimes q^{g-1}$.
On the other hand, according to \cite[Théorème 2]{lachaud}, the cardinality of $J(\KK)$ is at
least $q^{g-1}(q-1)^2(q+1)^{-1}(g+1)^{-1}$. So the proportion
  $D(\KK)/J(\KK)$ is $\le g^{\OO g}l^g/q$.

\begin{lemma}[Number of points on  divisors]\label{lem:ub}
Let $\KK$ be a finite field with $q$ elements and  $C$ a curve 
of genus $g\ge 2$ over $\KK$.
Let $J$ be the Jacobian of $C$.
 Let $O$ be a $\KK$-point on $C$ and
$o$ the corresponding class in $\Pic^1(C)$.
 Let  $D\subset J$  be an algebraic subset
of  codimension
one,
 algebraically
equivalent to $kW_{-(g-1)o}$ for $k\ge 1$. Set
$l=\max(3,k)$. The number of $\KK$\!-rational points on 
$D$ is bounded from above by $g\ttimes (g !) \ttimes l^g\ttimes q^{g-1}$.
The ratio $|D(\KK)|/ |J(\KK)|$ is bounded from above by $g^{\OO g}l^g/q$.
\end{lemma}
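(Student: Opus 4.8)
The plan is to enlarge $D$ to a very ample divisor and then apply Lemma~\ref{lem:LG} together with Poincar\'e's formula.

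\emph{Step 1: reduction to a very ample divisor.} Since $l=\max(3,k)\ge k$, I would introduce the effective cycle $E=D+(l-k)W_{-(g-1)o}$, whose support contains $D$, so that $|D(\KK)|\le|E(\KK)|$. It is algebraically equivalent to $lW_{-(g-1)o}$, hence, after base change to $\barKK$, linearly equivalent to a translate of $lW_{-(g-1)o}$. Every translate of $W_{-(g-1)o}$ is ample \cite[Chapter II, \S 6]{mumford} and $l\ge 3$, so $E$ is very ample \cite[Chapter III, \S 17]{mumford}. As $D$ and $W_{-(g-1)o}$ are defined over $\KK$, so are $E$ and the line bundle $\cO_J(E)$; the complete linear system $|E|$ therefore gives a closed immersion $J\hookrightarrow\PP^N_\KK$ (very ampleness may be checked after the faithfully flat base change to $\barKK$), under which $E$ is a hyperplane section. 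In particular $E$ is a projective algebraic set over $\KK$, pure of dimension $n=g-1$.

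\emph{Step 2: the degree, via Poincar\'e.} The degree of the image of $J$ in $\PP^N$ is $E^g=l^g\left(W_{-(g-1)o}\right)^g$, and Poincar\'e's formula \cite[Chapter I, \S 5]{arbarello} gives $\left(W_{-(g-1)o}\right)^g=g!$, so this degree is $l^g\,g!$. A hyperplane section has the same degree as a cycle, and the sum of the degrees of the $\KK$-irreducible components of a cycle is at most its degree; hence in Lemma~\ref{lem:LG} we may take $d\le l^g\,g!$. That lemma then yields
\[|D(\KK)|\le|E(\KK)|\le l^g\,g!\,(q^{g-1}+q^{g-2}+\dots+q+1)\le g\ttimes(g!)\ttimes l^g\ttimes q^{g-1},\]
the last inequality because the geometric sum has $g$ terms, each at most $q^{g-1}$. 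This is the first assertion.

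\emph{Step 3: the ratio.} Dividing by the lower bound $|J(\KK)|\ge q^{g-1}(q-1)^2(q+1)^{-1}(g+1)^{-1}$ of \cite[Th\'eor\`eme 2]{lachaud} gives
\[\frac{|D(\KK)|}{|J(\KK)|}\le g\ttimes(g!)\ttimes(g+1)\ttimes l^g\ttimes\frac{q+1}{(q-1)^2}.\]
Since $(q+1)/(q-1)^2\le\OO/q$ for every prime power $q\ge 2$ and $g\ttimes(g!)\ttimes(g+1)\ttimes\OO\le g^{\OO g}$, the right-hand side is at most $g^{\OO g}l^g/q$, which is the second assertion.

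The argument is essentially routine once Lemma~\ref{lem:LG}, the ampleness of the theta divisor, and Poincar\'e's formula are available. The only points that need a little care are the descent of very ampleness to the $\KK$-rational line bundle $\cO_J(E)$ in Step~1 and, in Step~2, the observation that replacing the intersection number $E^g$ of the hyperplane section by the sum of the degrees of its irreducible components (as required by Lemma~\ref{lem:LG}) can only decrease the quantity; both are standard.
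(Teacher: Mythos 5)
Your proof is correct and follows essentially the same route as the paper's own argument (given in the paragraph preceding the lemma): enlarge $D$ to $E=D+(l-k)W_{-(g-1)o}$, use very ampleness of $E$ to view it as a hyperplane section, apply Lemma~\ref{lem:LG} with degree $E^g=l^g\,g!$ from Poincar\'e's formula, and divide by the Lachaud--Martin-Deschamps lower bound on $|J(\KK)|$. The extra care you take about descent of very ampleness to $\KK$ and about comparing the intersection number with the sum of degrees of irreducible components is a welcome refinement of the same argument, not a different one.
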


\subsection{Determinants}\label{sec:deter}

The evaluation method presented in Section~\ref{sec:aco} only
applies to  Alpha functions introduced in
Section~\ref{sec:alpha}. These Alpha functions form a subfamily
of Eta functions.
Mascot   introduced in \cite{Mascot} an efficient evaluation 
method that applies
to another interesting  subfamily. 

One   can also define and evaluate 
functions on $J$ using determinants. See
\cite{baker, fay, sb}.
We shall see that  every Eta function can be expressed as a combination
of Alpha functions, as in Section~\ref{sec:alpha},  and determinants. Let $D$ be a divisor on $C$ with degree 
$d\ge 2g-1$. %Let  $D_-$ be the  negative part of $D$.
Set \[n=\ell(D)=d-g+1.\] Let $ f  =  (f_k)_{1\le k\le n}$ be a basis of $L(D)$.
For $P=(P_l)_{1\le l\le n}$ in $C^n$ disjoint from the positive part of $D$ we set 
\[\beta[f](P)=\det (f_k(P_l))_{k,l}\] and thus define a function
$\beta[f]$ on $C^n$. Call    \[\jmath  : C^n  \rightarrow \Pic^n(C)\] 
the Jacobi integration map. It maps $(P_1, \ldots, P_n)$ onto the class
of $P_1+\dots+P_n$.  We call
\[\pi_l : C^n\rightarrow C\] the projection onto the  $l$-th factor. 
For ${1\le i<j\le n}$  we set 
\[\Delta_{i,j} =\{(P_1, \ldots, P_n) | P_i=P_j\}\subset C^n.\] Let
\[\Delta = \cup_{1\le i<j\le n} \Delta_{i,j} \subset C^n\] be the full diagonal.
The divisor of $\beta[f]$ is 
\begin{equation}\label{eq:fay}
(\beta[f])=\Delta+\jmath ^*(t_{\iota(D)}([-1]^* W))+\sum_{1\le l\le n} \pi_l^*(-D)
\end{equation}
where $t_{\iota(D)}([-1]^*W) = W_{\iota(D)-\omega }\subset \Pic^{n}(C)$ is 
the translate of $[-1]^*W$ by the class of $D$. When $\KK$ has 
characteristic zero Equation~\ref{eq:fay}
results from \cite[Proposition 2.16]{fay}. For general $\KK$,
a Galois theoretic
proof is given  by  Shepherd-Barron in \cite[Corollary 4.2]{sb}. 
Roughly speaking the term  $\Delta$  in Equation~(\ref{eq:fay}) 
means that the determinant vanishes when $P_i=P_j$ because there
are two equal collumns in that case. The $\sum_{1\le l\le n} \pi_l^*(-D)$ says that
poles of the determinant come from poles of the coefficients
in it. The term $\jmath ^*(t_{\iota(D)}([-1]^* W))$ says that if 
the  $n$  points  $P_1$, \dots, $P_n$, are  distinct, 
the determinant vanishes if and only if there exists 
a non-zero function
in $L(D)$ vanishing at $P_1$, \dots, $P_n$.
And this means that $D$ is linearly  equivalent 
to $P_1+\dots+P_n$ plus some effective divisor of degree $g-1$.

We now assume that we have a collection of divisors 
$D= (D^{(i)})_{1\le i\le I}$. We assume that all $D^{(i)}$ have   degree $d=2g-1$. So
$n=\ell(D^{(i)})=g$. We are given a vector of integers
$e = (e_i)_{1\le i\le I}$ such that $\sum_{1\le i\le I} e_i=0$. For every $i$ we choose a basis $f^{(i)} = (f^{(i)}_{k})_{1\le k\le g}$  of
$L(D^{(i)})$.
We assume that  $\sum_{1\le i\le I} e_i \ttimes D^{(i)}$ is the (principal)
divisor of some function $h$ on $C$. We call $\alpha[h]$ the function on $J$ associated with $h$, as constructed in Section~\ref{sec:alpha}.
We set $f=(f^{(i)})_{1\le i\le I} $.   Define  the function
\[\beta[D,e,f]=\prod_{1\le i\le I} \beta[f^{(i)}]^{e_i}\] on $C^g$. It has  divisor
 \[(\beta[D,e,f])=\sum_i e_i \ttimes \jmath ^*(W_{\iota(D^{(i)})-\omega})-\sum_{\stackrel{1\le i\le I}{1\le l\le g}} e_i\ttimes \pi_l^*(D^{(i)}).\]
There exists a function $\beta '[D,e,f]$ on $\Pic^g(C)$ such that  $\beta [D,e,f] =   \beta '[D,e,f] \circ \jmath $.
Indeed, permuting the $g$ points $(P_i)_{1\le i\le g}$ multiplies each factor $\beta[f^{(i)}]$ by the same sign.
We call $\gamma[D,e,f]$ the function on $J=\Pic^0(C)$ obtained by composing   $\beta '[D,e,f]$ with the translation
by $go$. The product    $\gamma[D,e,f]\ttimes \alpha[h]$  has divisor
 \[(\gamma[D,e,f])  +  (\alpha[h]) = \sum_i e_i W_{-(g-1)o+u_i},\]
where \[u_i = \iota(D^{(i)})-\omega-o  \in  J(\KK).\] We deduce that 
\begin{equation}\label{eq:comb}
\gamma[D,e,f]\ttimes \alpha[h] = \eta[\ugot] \in \KK  (J)^*/\KK^*
\end{equation}
 where  $\ugot = \sum_i e_i[u_i]$. This is exactly what we need. Every  Eta
function decomposes (up to a multiplicative  scalar) as the product of
a certain number of determinants times
some Alpha function, which we know how to compute.  In the next 
Section~\ref{sec:evaleta} we deduce  an algorithm for evaluating
Eta functions.

\subsection{Evaluating Eta functions}\label{sec:evaleta}

We explain how to evaluate Eta functions, using the product decomposition
given in Equation~(\ref{eq:comb}).
We are given  $\ugot = \sum_{1\le i\le I} e_i[u_i]$ 
a zero-cycle on $J$. We assume that $u_i\in J(\KK)$ for $1\le i\le I$.
We can  and will assume without loss of generality
that $\deg(\ugot)=\sum_i e_i = 0$ and $s(\ugot)=\sum_i e_iu_i = 0$.
We are given two classes $x$ and $y$ in $J(\KK)$.
The class $x$ is represented by a divisor $D_x-gO$ where
$D_x$ is effective with degree $g$.
The class $y$ is represented similarly by a divisor $D_y-gO$.
We assume that  neither of $x$ and  $y$  belong to the support of the divisor
$\sum_{1\le i\le I}e_iW_{-(g-1)o+u_i}$. We want to evaluate 
$\eta[\ugot,y](x)$.

The algorithm  goes as follows.

\begin{enumerate}
\item For every $1\le i\le I$, find an effective divisor $D^{(i)}$ of degree
$2g-1$ such that $D^{(i)}$ does neither  meet $D_x$ nor $D_y$, and 
$\iota(D^{(i)})-\omega-o$ is  the class $u_i$. 
\item Find a non-zero function $h$ in $\KK(C)$ with divisor 
$\sum_{1\le i \le I}e_iD^{(i)}$.
\item  For every $1\le i\le I$, compute a basis $f^{(i)}=(f^{(i)}_k)_{1\le k\le g}$
of $L(D^{(i)})$. 
\item Write $D_x = X_1+X_2+\dots+X_g$ and 
 $D_y = Y_1+Y_2+\dots+Y_g$ where  $X_k$ and $Y_k$ are points
in $C(\barKK)$ for $1\le k\le g$. 
 For every $1\le i\le I$, compute 
\[\delta_x^{(i)} = \det (f^{(i)}_k(X_l))_{1\le k,l \le g} \text{ and } \delta_y^{(i)} = \det (f^{(i)}_k(Y_l))_{1\le k,l \le g}.\]
\item Compute $\alpha[h](x)$ and $\alpha[h](y)$.
\item Return 
\[\frac{\alpha[h](x)}{\alpha[h](y)} \ttimes{\prod_{1\le i\le I} (\delta_x^{(i)}/\delta_y^{(i)})^{e_i}}.\]
\end{enumerate}

Note that the product above reflects the product in Equation~(\ref{eq:comb}).
The only difference is that we evaluate at two points $x$ and $y$ to fix 
the  multiplicative constant in $\KK^*$. 
We now precise every step.
In step (1) we assume that the class $u_i$ is given
by a divisor $U_i-gO$ where $U_i$ is effective with degree $g$.
We proceed as in \cite[Lemmata 13.1.7-8-9]{Couveignesmodp}.
We choose a canonical divisor $K$ on $C$
and compute $L(U_i-(g-1)O+K)$. With every non-zero function
$f$ in this linear space is associated a candidate   divisor
\[(f)+U_i-(g-1)O+K\] for $D^{(i)}$. We eliminate the candidates
that meet either $D_x$ or $D_y$. The corresponding functions
$f$ belong to a union of at most $2g$ strict subspaces of 
$L(U_i-(g-1)O+K)$. If the cardinality of $\KK$ is bigger than
$2g$ we find a decent divisor $D^{(i)}$ by solving inequalities.
If $\KK$ is too small, we can  replace $\KK$  by a small extension of it.
In any case, we find some $D^{(i)}$ at the expense of $g^\OO$ operations
in $\KK$.

Step (2) is   effective Riemann-Roch. It requires $(g\ttimes |e| )^\OO$ operations
in the base field,  where \[|e| = \sum_{1\le i\le I}|e_i|\]
is the $\ell^1$-norm. 
Step (3)   is similar to step (2) and requires
 $I\ttimes g^\OO$ operations in $\KK$.
Step (4) requires some care. Brute force calculation
with the $X_k$ and $Y_k$ may not be polynomial time in the genus
because the degree over $\KK$ of the
decomposition field of $D_x$ and $D_y$ may
be very large. 
However, if $\KK$ is finite and if $D_x$ is irreducible over $\KK$, 
then this decomposition
field has degree $g$, which is fine with us. 
In general, we write $D_x = \sum_{1\le l\le L} a_lR_l$ where the $R_l$
are pairwise distinct irreducible divisors and the $a_l$ are positive integers.
We compute a new basis $(\phi_k)_{1\le k\le g}$ for
$L(D^{(i)})$ which is adapted to the decomposition
of $D_x$ in the following sense: we start with a basis
of $L(D^{(i)}  - \sum_{l\ge 2}a_lR_l)$, we continue with a basis
of $L(D^{(i)}  - \sum_{l\ge 3}a_lR_l)/L(D^{(i)}  - \sum_{l\ge 2}a_lR_l)$,
we continue  with a basis
of $L(D^{(i)}  - \sum_{l\ge 4}a_lR_l)/L(D^{(i)}  - \sum_{l\ge 3}a_lR_l)$,
and so on. The matrix  $(\phi^{(i)}_k(X_l))_{1\le k,l \le g}$
is block-triangular, so its determinant is a product
of $L$ determinants (one for each $R_l$). We compute
each of these $L$ determinants by brute force and multiply them together.
We multiply the resulting product 
 by the determinant of the transition matrix
between the two bases.

For step (5) we use the method described in Section~\ref{sec:evalal}.
Step (6) seems trivial, but it hides an ultimate difficulty. If $D_x$
is not simple, then all $\delta_x^{(i)}$ are zero and there appear
artificial indeterminacies in the product $\prod_i (\delta_x^{(i)})^{e_i}$.
We use a deformation to circumvent this difficulty. 
We introduce
a formal parameter $t$ and consider the field $\LL = \KK ((t))$ of formal
series in $t$ with coefficients in $\KK$.
Consider for example the worst case in which 
 $D_x$ is $g$ times a point $A$. We fix a local parameter 
$z_A \in \KK(C)$ at 
$A$. We fix $g$ pairwise distinct scalars $(a_m)_{1\le m \le g}$
in $\KK$. In case the cardinality of $\KK$ is $<g$,  we replace
$\KK$ by a small degree extension of it.
 We denote $X_1(t)$, $X_2(t)$, \ldots, $X_g(t)$, the $g$ points in
$C(\LL)$ associated with the values $a_1t$, \ldots , $a_gt$,
of the local  parameter $z_A$. We perfom the calculations
described above with $D_x$ replaced by $D_x(t) =
X_1(t)+\dots +X_{g}(t)$, and set $t=0$ in the result. Since we use a field of
series, we care about the necessary $t$-adic accuracy. This is
the maximum $t$-adic valuation of the $\beta[f^{(i)}](D_x(t))$.
Assuming that $x$ does not belong to the support 
of the divisor $(\eta[\ugot]) = \sum_{1\le i\le I}e_iW_{-(g-1)o+u_i}$,
these valuations  all are equal to 
$g(g-1)/2$.
%See \cite[Section 12.5]{CouveignesC}. 
So the complexity remains polynomial in the genus $g$.
In case $\KK$ is a finite field we obtain the theorem below.
\begin{theorem}[Evaluating Eta functions on the Jacobian]\label{th:slo}
There exists a deterministic   
algorithm that takes as input 
\begin{itemize}
\item a finite field $\KK$ with cardinality $q$, 
\item a curve $C$ of genus $g\ge 2$ over $\KK$,
\item  a collection
of $\KK$-points  $(u_i)_{1\le i\le I}$ on the Jacobian $J$ of $C$,
\item 
a zero-cycle $\ugot = \sum_{1\le i\le I} e_i[u_i]$
on   $J$,  such that $\deg(\ugot)=0$ and $s(\ugot)=0$,
\item a point $O$ in $C(\KK)$, 
\item and two  points $x, y \in J(\KK)$, not in $\cup_{1\le i\le I}W_{-(g-1)o+u_i}$.
\end{itemize}
The algorithm  computes $\eta[\ugot,y](x)$ in time 
$(g\ttimes |e|)^\OO\ttimes (\log q)^{1+\egot(q)}$, where $|e| = \sum_{1\le i\le I}|e_i|$ is the $\ell^1$-norm of
$e$.
\end{theorem}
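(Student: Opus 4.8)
The plan is to package the algorithm described in the preceding paragraphs (steps (1)--(6), together with the refinements for steps (4) and (6)) and to track the complexity of each step in units of operations in $\KK$, then to convert to elementary operations via the cost $(\log q)^{1+\egot(q)}$ per $\KK$-operation. The normalization $\deg(\ugot)=0$ and $s(\ugot)=0$ is harmless: equation~(\ref{eq:addit}) lets us reduce any zero-cycle to this case at the cost of multiplying the $\ell^1$-norm $|e|$ by a bounded factor, so it suffices to handle this case. The correctness of the output is exactly equation~(\ref{eq:comb}): the product $\gamma[D,e,f]\ttimes\alpha[h]$ equals $\eta[\ugot]$ up to a scalar in $\KK^*$, and dividing the value at $x$ by the value at $y$ kills that scalar and pins down $\eta[\ugot,y](x)$. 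So the substance is entirely the complexity bookkeeping.

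First I would run through steps (1)--(5) recording costs. Step (1): for each $i$ we compute $L(U_i-(g-1)O+K)$ and pick a function avoiding the poles of $D_x$ and $D_y$; by the discussion this is $g^\OO$ operations in $\KK$ per index (enlarging $\KK$ by a bounded-degree extension if $\#\KK\le 2g$), hence $I\cdot g^\OO$ in total. Step (2) is one effective Riemann--Roch computation producing $h$ with divisor $\sum_i e_iD^{(i)}$, costing $(g\cdot|e|)^\OO$ operations in $\KK$ — this is where the $|e|$ dependence enters, since the divisor has degree $(2g-1)|e|$ and the Brill--Noether/Hess machinery of Section~\ref{sec:aco} is polynomial in the degrees involved. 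Step (3) is $I\cdot g^\OO$. Step (4) requires the block-triangular trick: decompose $D_x=\sum_l a_lR_l$ into irreducibles, build a basis of $L(D^{(i)})$ adapted to this flag, so the matrix $(\phi_k^{(i)}(X_l))$ is block-triangular and its determinant factors as a product over the $R_l$, each computed over the degree-$(\deg R_l)$ residue algebra; each such determinant is $g^\OO$ operations in $\KK$ (using that an operation in a degree-$d$ $\KK$-algebra is $d^{1+\egot(d)}$ $\KK$-operations), and we multiply in the transition determinant; total $I\cdot g^\OO$ per point, and likewise for $D_y$. Step (5) is the norm computation of Section~\ref{sec:evalal}, costing $(g\cdot d)^\OO=(g\cdot|e|)^\OO$ operations in $\KK$ since $h$ has degree $(2g-1)|e|$. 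Summing, steps (1)--(5) cost $(g\cdot|e|)^\OO$ operations in $\KK$.

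The main obstacle is step (6), the degenerate case where $D_x$ (or $D_y$) is non-simple, so all $\delta_x^{(i)}$ vanish and the ratios $(\delta_x^{(i)}/\delta_y^{(i)})^{e_i}$ become $0/0$. Here I would invoke the $\LL=\KK((t))$ deformation described above: replace $D_x$ by a nearby divisor $D_x(t)=X_1(t)+\dots+X_g(t)$ with the $X_m(t)$ given by pairwise-distinct values $a_mt$ of a chosen local parameter, redo steps (1)--(5) over $\LL$, and set $t=0$ at the end. The key quantitative point is that, because $x$ is assumed not to lie on $\bigcup_i W_{-(g-1)o+u_i}=\mathrm{Supp}(\eta[\ugot])$, the order of vanishing of each $\beta[f^{(i)}](D_x(t))$ at $t=0$ is exactly $g(g-1)/2$ (this follows from the divisor formula~(\ref{eq:fay}): the only contribution to the $t$-adic valuation comes from the diagonal term $\Delta$, and the confluence of $g$ distinct points into one picks up $\binom g2$), so it suffices to carry all $t$-adic expansions to precision $O(g^2)$; each $\KK((t))$-operation at that precision is $g^{\OO}$ $\KK$-operations, and the whole deformed computation stays within $(g\cdot|e|)^\OO$ operations in $\KK$. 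I should also note that when $D_x$ is only partially degenerate, one deforms only the repeated irreducible components, which is strictly cheaper; the worst case $D_x=g[A]$ bounds everything. Finally, converting operations in $\KK$ to elementary operations multiplies by $(\log q)^{1+\egot(q)}$, and since any bounded-degree extension of $\KK$ we introduced contributes only a constant factor to the exponent, we obtain the claimed running time $(g\cdot|e|)^\OO\ttimes(\log q)^{1+\egot(q)}$. $\square$
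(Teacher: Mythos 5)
Your proposal is correct and follows essentially the same route as the paper: Section~\ref{sec:evaleta}'s six-step algorithm justified by the decomposition~(\ref{eq:comb}), with the block-triangular basis for step (4), the $\KK((t))$ deformation with $t$-adic precision $g(g-1)/2$ for step (6), and the same step-by-step cost accounting. Your added gloss that the valuation $g(g-1)/2$ comes solely from the diagonal term $\Delta$ in~(\ref{eq:fay}) under the hypothesis $x\notin\cup_iW_{-(g-1)o+u_i}$ is a correct and welcome elaboration of a point the paper only asserts.
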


Using fast exponentiation and Equation~(\ref{eq:addit})
in the algorithm above, we can  evaluate Eta functions
in time  $g^\OO\ttimes I\ttimes \left(\log |e|\right)  \ttimes (\log q)^{1+\egot(q)}$. However,
this method  may fail when one of the arguments $x$ or $y$
belongs to the
support of the divisor of some intermediate factor.
According to Lemma~\ref{lem:ub} 
the proportion of such $x$ in $J(\KK)$ 
is $\le g^{\OO g} \ttimes I\ttimes \left(\log |e|\right)/q$. A fast  method
that works for a large proportion of the inputs will be enough
to us in the sequel.

\begin{proposition}[Fast evaluation of  Eta functions on the Jacobian]\label{prop:fas}
There exists a deterministic   
algorithm that takes as  input 
\begin{itemize}
\item a finite field $\KK$ with cardinality $q$, 
\item a curve $C$ of genus $g\ge 2$ over $\KK$, 
\item a point $O$ in $C(\KK)$, 
\item a collection
of $\KK$-points  $(u_i)_{1\le i\le I}$ on the Jacobian $J$ of $C$,
\item a zero-cycle $\ugot = \sum_{1\le i\le I} e_i[u_i]$
on   $J$, such that $\deg(\ugot)=0$ and $s(\ugot)=0$,
\item 
and two  points $x, y \in J(\KK)$, not in $\cup_{1\le i\le I}W_{-(g-1)o+u_i}$.
\end{itemize}
The algorithm  returns either {\rm FAIL} or  $\eta[\ugot,y](x)$ in time 
\[g^\OO\ttimes I\ttimes \left( \log |e|\right)  \ttimes (\log q)^{1+\egot(q)},\] where 
$|e| = \sum_{1\le i\le I}|e_i|$ is the $\ell^1$-norm of
$e$. 

For given  $\KK$, $C$, $\ugot$, $O$, there exists a subset 
$\FAIL(\KK,C,\ugot, O)$ 
of $J(\KK)$ with density \[\le g^{\OO g} \ttimes I\ttimes \log (|e|)/q\] 
and such that the algorithm succeeds whenever neither 
 $x$ nor  $y$ belongs to $\FAIL(\KK,C,\ugot,O)$.
\end{proposition}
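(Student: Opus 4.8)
The plan is to build the fast algorithm on top of the slow but reliable algorithm of Theorem~\ref{th:slo} by using the additivity relation~(\ref{eq:addit}) together with a binary (double-and-add) strategy, and then to bound the ``bad'' locus by Lemma~\ref{lem:ub}.

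\textbf{The binary reduction.}
First I would reduce to the case where the zero-cycle is a difference of two points. Write $\ugot = \sum_{1\le i\le I} e_i[u_i]$. For each index $i$, expand $e_i u_i$ by repeated doubling: writing $e_i$ in binary, one has $[e_i u_i]$ expressed, via~(\ref{eq:addit}), through the intermediate points $[2^j u_i]$ and partial sums, at the cost of $O(\log|e_i|)$ applications of the additivity formula, each of which introduces one correction factor of the form $\eta[[a]+[b]-[a+b],y]$ for points $a,b\in J(\KK)$. Each such elementary factor is a single Eta function attached to a zero-cycle of $\ell^1$-norm $3$ and of degree $0$ and sum $0$, so it is evaluated by Theorem~\ref{th:slo} in time $g^\OO\ttimes(\log q)^{1+\egot(q)}$ (the $|e|$ appearing there is bounded by an absolute constant). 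Multiplying $x$-values and dividing by $y$-values of these $O(I\log|e|)$ elementary factors, and doing all arithmetic in $\KK$ (each operation costing $(\log q)^{1+\egot(q)}$), yields the claimed running time $g^\OO\ttimes I\ttimes(\log|e|)\ttimes(\log q)^{1+\egot(q)}$.

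\textbf{The failure locus.}
The reason the algorithm may return {\rm FAIL} is that each invocation of Theorem~\ref{th:slo} requires $x$ and $y$ to avoid the support of the divisor of the corresponding Eta function, and by~(\ref{eq:addit}) these divisors are of the form $\sum e_i W_{-(g-1)o+u_i}$ with bounded coefficients. The total bad set $\FAIL(\KK,C,\ugot,O)$ is then the union, over the $O(I\log|e|)$ elementary factors, of the supports of these divisors. Each such divisor is an algebraic subset of $J$ of codimension one, algebraically equivalent to $kW_{-(g-1)o}$ with $k$ bounded by a small absolute constant (at most $3$, say, for each doubling step, since each elementary factor involves at most three translates of $W_{-(g-1)o}$); Lemma~\ref{lem:ub} bounds the density of $\KK$-points on each one by $g^{\OO g}/q$. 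Summing over the $O(I\log|e|)$ factors gives density $\le g^{\OO g}\ttimes I\ttimes\log(|e|)/q$, as required. Whenever neither $x$ nor $y$ lies in this union, every call to the slow algorithm succeeds, the intermediate products are well defined and nonzero, and their telescoping product is exactly $\eta[\ugot,y](x)$ by repeated use of~(\ref{eq:addit}).

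\textbf{Main obstacle.}
The routine part is the binary expansion and the complexity count; the delicate point is keeping careful track, through the recursion, of which translates $W_{-(g-1)o+v}$ actually appear in the divisors of the intermediate factors, so that the algebraic equivalence class of each is controlled by an \emph{absolute} constant (independent of $g$, $I$, and $|e|$) and the union bound over $O(I\log|e|)$ pieces stays linear in $I\log|e|$. One must also verify the bookkeeping that the deterministic choice of doubling chain makes $\FAIL(\KK,C,\ugot,O)$ a set depending only on $\KK,C,\ugot,O$ (and not on $x,y$), which is immediate once the chain is fixed canonically, e.g. by always using the standard binary representation of each $e_i$. With these two points settled, the statement follows by combining Theorem~\ref{th:slo}, Equation~(\ref{eq:addit}), and Lemma~\ref{lem:ub}.
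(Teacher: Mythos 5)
Your proposal is correct and follows essentially the same route as the paper: the paper obtains Proposition~\ref{prop:fas} precisely by applying fast exponentiation (double-and-add via Equation~(\ref{eq:addit}), in the spirit of Miller's algorithm) to the algorithm of Theorem~\ref{th:slo}, and by bounding the density of the union of the supports of the $O(I\log|e|)$ intermediate divisors with Lemma~\ref{lem:ub}. Your write-up merely makes explicit the bookkeeping that the paper leaves implicit, and the minor discrepancies (e.g.\ counting at most three rather than four translates of $W_{-(g-1)o}$ per correction factor) are harmlessly absorbed into the $g^{\OO g}$ constant.
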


Fast exponentiation for evaluating
Weil functions on abelian varieties first  appears in work by Miller \cite{miller} in the context of pairing computation on elliptic curves.

\section{Bases of linear spaces}\label{sec:bl}

Being able to evaluate   Eta functions $\eta[\ugot , y]$
we now consider an integer 
 $l\ge 2$ and 
look for  a  basis of  $H^0(J,\cO_{J}(lW_{-(g-1)o}))$.
A related problem is to pick random functions in this linear space 
with close enough to uniform probability.
We assume that  the base field 
is finite, and  use the rough consequences of
Weil bounds stated   in Section~\ref{sec:diese}.
Fix two positive coprime  integers $a$ and $b$ such that $a+b=l$. 
For every $u$
and $y$ in $J(\KK)$ such that 
$y\not\in W_{-(g-1)o}\cup W_{-(g-1)o+au}\cup W_{-(g-1)o-bu}$
 call  $\tau[u,y]$ the unique function with divisor
\[(\tau[u,y])=bW_{-(g-1)o + au}+aW_{-(g-1)o-bu}-lW_{-(g-1)o}\]
such that $\tau[u,y](y)=1$. So 
\[\tau[u,y] = \eta[b[au]+a[-bu],y].\]

Let $\tau[u]$ be the class of $\tau[u,y]$ in $\KK  (J)^*/\KK^*$. 
When $u$ is an $l$-torsion point $\tau[u]=\eta[l[au]]$ is a level $l$ Theta function.  It is a classical result of the theory of Theta functions
that the collection of all $\eta[l[u]]$ when $u$ runs over $J[l](\barKK)$ generate $H^0(J_\barKK,\cO_{J_\barKK}(lW_{-(g-1)o}))$. See  \cite[Theorem 3.2.7]{birk} in case $\KK$
has characteristic zero and \cite[\S 10]{tata3} in general, or  Section~\ref{sec:canotheta} below.  So the collection
of all $\tau[u]$ when  $u$ runs over the set
$J[l](\barKK)$ is a generating set for  $\PP(H^0(J_\barKK,\cO_{J_\barKK}(lW_{-(g-1)o})))$.
So the map
$u\mapsto  \tau[u]$ from $J$ to  $\PP(H^0(J,\cO_{J}(lW_{-(g-1)o})))$
is non-degenerate.
Hyperplane sections  for this map are algebraically equivalent
to $ablW_{-(g-1)o}$.

We  pick a random element $u$ in $J(\KK)$,  using the  Monte Carlo probabilistic
algorithm given in \cite[Lemma 13.2.4]{Couveignesmodp}.
This algorithm returns 
a random element $u$ with uniform probability inside  a subgroup
of $J(\KK)$ with index $\xi \le \OO g^\OO$.
We then consider the function 
$\tau[u,y]$ where  $y$ is any point in $J(\KK)$ not in 
$W_{-(g-1)o}\cup  W_{-(g-1)o+au}\cup W_{-(g-1)o-bu}$.
According to Lemma~\ref{lem:ub},
for every hyperplane $H$ in  $\PP(H^0(J,\cO_{J}(lW_{-(g-1)o})))$, the 
proportion  of $u\in J(\KK)$ such that 
 $\tau[u]$ belongs to $H$ is $\le (lg)^{\OO g}/q$. We assume that
$q$ is large enough to make this proportion
smaller than $\le 1/(2\xi)$. The probability that 
$\tau[u]$ belongs to $H$ is then $\le 1/2$.

\begin{proposition}[Random functions]\label{prop:randomtheta}
There exists a constant $\OO$ such that the following is true. There exists
a probabilistic Las Vegas algorithm that takes as input 
\begin{itemize}
\item three 
integers $l\ge 2$, $a\ge 1$, and $b\ge 1$, such that $a$ and $b$ are coprime and  $a+b=l$,   
\item a  curve $C$
of genus $g\ge 2$ 
over a field $\KK$ with $q$ elements, such that $q\ge (lg)^{\OO g}$,
\item  a point $O$ in $C(\KK)$.
\end{itemize}
The algorithm  returns
a pair  $(u,y)$ in $J(\KK)^2$ such that $\eta[u,y]\in   H^0(J,\cO_{J}(lW_{-(g-1)o}))$ is a random function with probability  measure $\mu$ such that 
$\mu(H)\le 1/2$ for every hyperplane $H$ in $H^0(J,\cO_{J}(lW_{-(g-1)o}))$.
The algorithm
runs in time $g^\OO \ttimes \left(\log l\right)\ttimes (\log q)^{1+\egot(q)}$.
\end{proposition}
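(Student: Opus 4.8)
\begin{myproof}
Write $W=W_{-(g-1)o}$ and $V=H^0(J,\cO_J(lW))$, and for $u,y\in J(\KK)$ let $\tau[u,y]=\eta[b[au]+a[-bu],y]$ be the function introduced just before the statement (this is what the statement abbreviates as $\eta[u,y]$). Its divisor is $bW_{au}+aW_{-bu}-lW$, with $W_v=t_v(W)$, so $\tau[u,y]$ is a nonzero section of $\cO_J(lW)$, well defined once the normalising point $y$ is chosen outside $W\cup W_{au}\cup W_{-bu}$; its class $\tau[u]\in\PP(V)$ does not depend on $y$ and is defined for every $u\in J(\barKK)$. The plan is to sample $u$ almost uniformly in $J(\KK)$, to pick $y$ outside those three divisors, to return $(u,y)$, and then to bound $\mu(H)$ for a fixed hyperplane $H\subset\PP(V)$ by counting the $u$ with $\tau[u]\in H$ by means of Lemma~\ref{lem:ub}.

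First I would spell out the algorithm. Draw $u\in J(\KK)$ with the Monte Carlo procedure of \cite[Lemma 13.2.4]{Couveignesmodp}: it returns a point uniform in some subgroup $G\le J(\KK)$ of index $\xi\le\OO g^\OO$, in time $g^\OO\ttimes(\log q)^{1+\egot(q)}$. Compute $au$ and $-bu$ by binary scalar multiplication on $J$; since $a,b\le l$ this costs $O(\log l)$ group operations, hence $g^\OO\ttimes(\log l)\ttimes(\log q)^{1+\egot(q)}$ elementary operations. Then repeatedly draw $y\in J(\KK)$ (again via \cite{Couveignesmodp}) and test, by three Riemann--Roch computations, whether $y$ lies in $W$, in $W_{au}$ or in $W_{-bu}$ (each test amounts to deciding effectivity of a degree $g-1$ divisor class), stopping as soon as $y$ avoids all three. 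Each of $W$, $W_{au}$, $W_{-bu}$ is algebraically equivalent to $W$, so by Lemma~\ref{lem:ub} (with $k=1$) their union contains at most a $g^{\OO g}/q$ fraction of $J(\KK)$; once $q\ge(lg)^{\OO g}$ a fresh $y$ is acceptable with probability $\ge 1/2$, so $O(1)$ trials suffice in expectation. The algorithm outputs $(u,y)$; its expected running time is $g^\OO\ttimes(\log l)\ttimes(\log q)^{1+\egot(q)}$, and since a valid pair is always produced it is Las Vegas.

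Next I would establish the bound on $\mu$, the law of the function $\tau[u,y]$ returned (equivalently, of $\tau[u]\in\PP(V)$, the normalisation being a scalar). Fix a hyperplane $H\subset\PP(V)$. For $u\in J[l]$ we have $lu=0$, hence $-bu=au$ and $\tau[u]=\eta[l[au]]$; since $\gcd(a,l)=\gcd(a,b)=1$, the map $v\mapsto av$ permutes $J[l]$, so $\{\tau[u]:u\in J[l](\barKK)\}$ coincides with the set of level $l$ theta functions $\{\eta[l[v]]:v\in J[l](\barKK)\}$, which spans $V$ by \cite[Theorem 3.2.7]{birk} and \cite[\S 10]{tata3}. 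Hence $u\mapsto\tau[u]$ is non-degenerate, so $H$ pulls back to an effective divisor $\{u:\tau[u]\in H\}$ of $J$ (rather than to all of $J$); as recalled just before the statement, this divisor is algebraically equivalent to $abl\,W$ — it is the fixed polar divisor of the coordinate functions $u\mapsto\tau[u,y](x)$, coming from pulling back translates of $W$ along $[a]$ and $[b]$ with multiplicities $b$ and $a$, and since all translates of $W$, in particular $[-1]^*W$, are algebraically equivalent one has $[a]^*W\equiv a^2W$, $[b]^*W\equiv b^2W$, whence the class $(a^2b+ab^2)W=abl\,W$. Applying Lemma~\ref{lem:ub} with $k=abl\le l^3$ gives
\[
\bigl|\{u\in J(\KK):\tau[u]\in H\}\bigr|\ \le\ g^{\OO g}\,l^{3g}\,q^{g-1}\ \le\ \frac{g^{\OO g}\,l^{3g}}{q}\,\bigl|J(\KK)\bigr|.
\]
As the sampled $u$ is uniform in $G$ with $|G|=|J(\KK)|/\xi$ and $\xi\le\OO g^\OO$, we obtain $\mu(H)\le\xi\,g^{\OO g}l^{3g}/q\le(lg)^{\OO g}/q$; this estimate is uniform in $H$, so fixing the absolute constant in the hypothesis $q\ge(lg)^{\OO g}$ large enough forces $\mu(H)\le 1/2$ for every hyperplane $H$, which is the assertion.

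The one genuinely substantial ingredient is the algebraic equivalence class of the hyperplane sections of $u\mapsto\tau[u]$: it requires both the non-degeneracy of this map — so that $H$ pulls back to an honest divisor, which is exactly where the classical generation theorem for level $l$ theta functions enters — and the identification of the fixed polar divisor $\equiv abl\,W$ of its coordinate functions. Everything else is an assembly of the already-known complexities of the random sampler, of scalar multiplication and of effective Riemann--Roch, followed by a single application of Lemma~\ref{lem:ub}.
\end{myproof}
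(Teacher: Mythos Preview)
Your proof is correct and follows essentially the same approach as the paper: the paper's argument is given in the two paragraphs immediately preceding the proposition, and you have reproduced it---sample $u$ via the Monte Carlo procedure of \cite{Couveignesmodp}, observe non-degeneracy of $u\mapsto\tau[u]$ from the generation of $V$ by level-$l$ theta functions, and bound $\mu(H)$ by applying Lemma~\ref{lem:ub} to the hyperplane pullback, which is algebraically equivalent to $abl\,W$. You add details the paper leaves implicit (the repeated sampling for $y$, the $\log l$ cost of computing $au$ and $-bu$, and a justification of the class $abl\,W$ via $[a]^*W\equiv a^2W$), but the structure is the same.
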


In order to find  a basis of $H^0(J,\cO_{J}(lW_{-(g-1)o}))$ we take  $I\ge \OO \ttimes l^g\ttimes \log(l^g)$ and pick $I$
random elements $(u_i)_{1\le i\le I}$ in $J(\KK)$ as explained above. 
For every $i$ we find a $y_i$ in $J(\KK)$ such that 
$y_i\not\in W_{-(g-1)o} \cup W_{-(g-1)+au_i}\cup W_{-(g-1)o-bu_i}$.
We pick another $I$ elements $(w_j)_{1\le j\le I}$ 
such that $w_j\not\in W_{-(g-1)o}$. We compute
$\tau[u_i,y_i](w_j)$ for every pair $(i,j)$. We put the corresponding
$I\times I$ matrix  in echelon form. If the rank is $l^g$ we deduce
a basis for both $H^0(J,\cO_{J}(lW_{-(g-1)o}))$ and its dual all at a time. 
 
\begin{proposition}[Basis of $H^0({J},\cO_{J}(lW_{-(g-1)o}))$]\label{prop:randomtheta2}
There exists a constant $\OO$ such that the following is true. There exists
a probabilistic Las Vegas algorithm that takes as input 
\begin{itemize}
\item three 
integers $l\ge 2$, $a\ge 1$, and $b\ge 1$, such that $a$ and $b$ are coprime and $a+b=l$,   
\item a  curve $C$
of genus $g\ge 2$ 
over a field $\KK$ with $q$ elements,  such that $q\ge (lg)^{\OO g}$,
\item 
a point $O$ in $C(\KK)$.
\end{itemize}
The algorithm returns $l^g$
triples  $(u_i,y_i,w_i)\in {J}(\KK)^3$ such that 
 $(\tau[u_i,y_i])_{1\le i\le l^g}$ is  a basis of $H^0({J},\cO_{J}(lW_{-(g-1)o}))$
and $(w_i)_{1\le i\le l^g}$  is a basis of its dual.
The algorithm
runs in time \[g^\OO \ttimes  (l^{ g})^{\omega(1+\egot(l^g))}\ttimes 
\left( \log q\right)^{1+\egot(q)}\]
where $\omega $ is the exponent in matrix multiplication.
\end{proposition}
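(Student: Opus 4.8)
The plan is to assemble the algorithm out of pieces already in place: the random-function sampler of Proposition~\ref{prop:randomtheta}, the fast Eta evaluation of Proposition~\ref{prop:fas}, and a standard linear-algebra rank computation. First I would recall that $\dim H^0(J,\cO_J(lW_{-(g-1)o})) = l^g$, by the theorem quoted in Section~\ref{sec:bl} (this is Poincar\'e/Riemann-Roch for an ample line bundle on an abelian variety of dimension $g$, or \cite[Theorem 3.2.7]{birk} and \cite[\S 10]{tata3}). Then I would run the sampler of Proposition~\ref{prop:randomtheta} a total of $I = \OO\cdot l^g\cdot\log(l^g)$ times to obtain pairs $(u_i,y_i)$, each giving a function $\tau[u_i,y_i] = \eta[b[au_i]+a[-bu_i],y_i]$ in the space, with the property that for every hyperplane $H$ the probability $\tau[u_i]\in H$ is $\le 1/2$; and independently sample $I$ points $(w_j)_{1\le j\le I}$ in $J(\KK)$ avoiding $W_{-(g-1)o}$ (again via \cite[Lemma 13.2.4]{Couveignesmodp}), which play the role of evaluation functionals. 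The matrix $M = (\tau[u_i,y_i](w_j))_{i,j}$ is then computed entry by entry using Proposition~\ref{prop:fas}; since $\ugot_i = b[au_i]+a[-bu_i]$ has $\ell^1$-norm $|e| = a+b = l$ and $I=2$ summands, each evaluation costs $g^\OO\cdot(\log l)\cdot(\log q)^{1+\egot(q)}$, for a total of $(l^g)^{2+\egot(l^g)}\cdot g^\OO\cdot(\log q)^{1+\egot(q)}$ entry computations. Finally I would put $M$ in echelon form over $\KK$; the rows that survive index a maximal linearly independent subfamily of the $\tau[u_i,y_i]$, and the pivot columns index a dual basis of evaluation functionals $w_i$. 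If the rank equals $l^g$, output the corresponding $l^g$ triples $(u_i,y_i,w_i)$; otherwise declare failure (this makes it Las Vegas) and retry.

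The correctness argument has two halves. For the \emph{rank} claim, observe that the functions $\tau[u]$, as $u$ ranges over $J[l](\barKK)$, span the whole space, so the map $u\mapsto\tau[u]$ is non-degenerate; hence for any proper subspace $S\subsetneq H^0(J,\cO_J(lW_{-(g-1)o}))$ the locus $\{u : \tau[u]\in S\}$ is contained in a divisor algebraically equivalent to $ablW_{-(g-1)o}$, and by Lemma~\ref{lem:ub} its density in $J(\KK)$ is $\le (lg)^{\OO g}/q \le 1/(2\xi)$ under the hypothesis $q\ge(lg)^{\OO g}$, where $\xi\le\OO g^\OO$ is the index of the subgroup from which the sampler draws uniformly; thus each fresh $u_i$ escapes $S$ with probability $\ge 1/2$. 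Running the standard coupon-collector/span argument: after processing the $i$-th sample, if the span so far is a proper subspace then with probability $\ge 1/2$ the dimension strictly increases, so with $I\ge\OO\cdot l^g\cdot\log(l^g)$ samples the span equals the full $l^g$-dimensional space except with probability $\le\OO^{-1}$ (say), which we can drive below any constant by enlarging $\OO$. For the \emph{dual basis} claim, once the $\tau[u_i,y_i]$ that survive echelon reduction are linearly independent and span the space, the chosen evaluation points $w_j$ corresponding to pivot columns give a matrix of full rank $l^g$, hence the functionals $f\mapsto f(w_i)$ are a basis of the dual; again the $w_j$ only fail to contain such a pivot set on a density-$\le(lg)^{\OO g}/q$ locus per linear condition, so with $I$ of them this is handled the same way. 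In the rare event that any of these genericity conditions fails, or Proposition~\ref{prop:fas} returns {\rm FAIL} for some entry (density $\le g^{\OO g}I\log l/q$ of bad $w_j$ by that proposition), we simply re-sample; this is why the algorithm is Las Vegas rather than Monte Carlo.

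For the \emph{running time}, the $2I$ invocations of the sampler of Proposition~\ref{prop:randomtheta} cost $I\cdot g^\OO(\log l)(\log q)^{1+\egot(q)}$; the $I^2 = (l^g)^{2+\egot(l^g)}$ entry evaluations via Proposition~\ref{prop:fas} cost $(l^g)^{2+\egot(l^g)}g^\OO(\log l)(\log q)^{1+\egot(q)}$; and the echelon reduction of an $I\times I$ matrix over $\KK$ costs $I^{\omega(1+\egot(I))}$ operations in $\KK$, i.e. $(l^g)^{\omega(1+\egot(l^g))}(\log q)^{1+\egot(q)}$ elementary operations, where $\omega\ge 2$ is the matrix-multiplication exponent. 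Absorbing the $\log l$, $\log(l^g)$ factors and the $g^\OO$ into the $\egot(l^g)$ and $g^\OO$ terms, and noting $2\le\omega$ so the linear-algebra term dominates the evaluation term, the total is $g^\OO\cdot(l^g)^{\omega(1+\egot(l^g))}\cdot(\log q)^{1+\egot(q)}$, as claimed. The expected number of retries is $O(1)$ by the probability bounds above, so the bound holds in expectation for the Las Vegas algorithm.

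\medskip

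The step I expect to be the main obstacle is the bookkeeping in the rank/genericity argument: one must verify that the non-degeneracy of $u\mapsto\tau[u]$ really does force the ``bad $u$'' locus for \emph{every} proper subspace to lie in a fixed algebraic-equivalence class (so that Lemma~\ref{lem:ub} applies uniformly), and that the sampler's non-uniformity — it is only uniform on a finite-index subgroup — does not spoil the $\le 1/2$ bound; everything else (Riemann-Roch dimension count, complexity accounting, coupon-collector tail) is routine given the earlier results.
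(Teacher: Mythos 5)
Your proposal is correct and follows essentially the same route as the paper: sample $I=\OO\ttimes l^g\ttimes\log(l^g)$ random functions $\tau[u_i,y_i]$ via Proposition~\ref{prop:randomtheta}, evaluate them at $I$ points $w_j$ using the fast Eta evaluation, put the resulting $I\times I$ matrix in echelon form, and accept when the rank is $l^g$, the echelon reduction dominating the cost at $(l^g)^{\omega(1+\egot(l^g))}$ operations in $\KK$. The paper states this in a single paragraph without spelling out the coupon-collector tail bound or the FAIL-handling, so your extra bookkeeping is consistent elaboration rather than a divergence.
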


One finds in \cite[Chapter 15]{shok} an elegant presentation of the complexity of matrix multiplication, a definition
of the exponent $\omega$, and a reasonably simple proof of Coppersmith and Winograd's inequality   $\omega < 2.41$. It is an open question whether $\omega =2$.
The current best result in this direction is the proof by  Le Gall in \cite{legall}  that $\omega <2.3728639$.  

If the condition $q\ge (lg)^{\OO g}$ in the Proposition above  is not met, we work with
a small extension $\LL$ of $\KK$, then make a descent from
$\LL$ to $\KK$ on the result. The resulting basis will consist of
traces of Tau functions.

\section{Canonical Theta functions}\label{sec:canotheta}

Let $l\ge 3$ be an odd prime. We assume that $l$ is different
from the characteristic $p$ of $\KK$. 
According to Equation~(\ref{eq:symw})  the divisor
$W_{-\theta} \subset J$ is symmetric. Let 
$\cL=\cO_{J}(lW_{-\theta})$  be the sheaf  associated 
to the divisor $lW_{-\theta}$. The Theta group
$\cG(\cL)$ fits in the exact sequence
\[1\rightarrow \Gm \rightarrow \cG(\cL)\rightarrow {J}[l]\rightarrow 0.\]
In this section we recall the definition of algebraic Theta functions. 
%Restriction to the case when $l$ is odd 
%allows us to be slightly
%more effective than  
%\cite{tata3}.
Level $l$ Theta functions belong to 
$H^0({J_\barKK},\cO_{J_\barKK}(lW_{-\theta}))$ and they
generate it. 
They  are  useful to define descent data. We shall need 
them in Section~\ref{sec:quo}.
 In this section we bound the complexity of evaluating Theta functions.

\subsection{Defining canonical Theta functions}
We recall the properties of canonical Theta
functions as defined e.g. in \cite[3.2]{birk} or 
\cite[\S 3]{tata3}. We shall see
that canonical Theta functions can be
characterized more easily when the level $l$ is odd.
For   $u$ in ${J}[l](\barKK)$ we let $\theta_u$ be a  function on ${J}_\barKK$
with divisor
$l(W_{-\theta+u}-W_{-\theta})$.  
We call \[\aa_u : H^0({J_\barKK},\cO_{J_\barKK}(lW_{-\theta})) \rightarrow 
H^0({J_\barKK},\cO_{J_\barKK}(lW_{-\theta}))\]
the endomorphism that maps every function $f$ onto the product of $\theta_u$ by $f\circ t_{-u}$.
For the moment $\theta_u$ and  $\aa_u$ are only defined up
to a multiplicative  scalar.  We now  normalize both of them.
We want the $l$-th iterate of $\aa_u$ to  be the identity.
So $\theta_u \ttimes  \left( \theta_u\circ t_u\right) \ttimes \, \dots \, \ttimes
\left(\theta_u\circ t_{(l-1)u}\right)$ should be one. 
We therefore divide $\theta_u$ by one of the
$l$-th roots of the above product
to ensure that 
$\aa_u$ 
has order dividing $l$. Now $\theta_u$ and 
$\aa_u$  are defined up to an $l$-th root of unity.
We compare $[-1]\circ \aa_u \circ [-1]$ and $\aa_u^{-1}$. They
differ by an $l$-th root of unity $\zeta$. % in $\KK (u)$. 
Since $l$
is odd, $\zeta$ has  square root $\zeta^{(l+1)/2}$. Dividing 
$\aa_u$ and $\theta_u$ by this square root we complete
their definition. 

\begin{proposition}[Canonical Theta functions]
For every $u$ in ${J}[l](\barKK)$ there is a unique function 
$\theta_u$ with divisor $l(W_{-\theta+u}-W_{-\theta})$ such that 
\begin{equation}\label{eq:prod1}
\theta_u \ttimes \left( \theta_u\circ t_u\right) \ttimes
\left( \theta_u\circ t_{2u}\right) \ttimes \dots \ttimes
\left(\theta_u\circ t_{(l-1)u}\right)=1
\end{equation}
 and 
\begin{equation}\label{eq:sym}
\theta_u\circ [-1] =\left(\theta_u\circ t_u\right)^{-1}.
\end{equation}
Further $\theta_{-u}=\theta_u\circ [-1]$.
The map $u\mapsto \theta_u$ is
Galois equivariant: for every $\sigma$ in the absolute Galois group
of $\KK$ we have \[{}^\sigma \! \theta_u=\theta_{\sigma(u)}.\]

Let $\aa_u$ be the endomorphism 
\begin{equation*}%\label{eq:}
\xymatrix{
\aa_u : &  H^0({J_\barKK},\cO_{J_\barKK}(lW_{-\theta})) \ar@{->}[r]    &  H^0({J_\barKK},\cO_{J_\barKK}(lW_{-\theta}))\\
&f\ar@{|->}[r]& \theta_u\ttimes
\left(f\circ t_{-u}\right).
}
\end{equation*}
we have  $\aa_u^l=1$ and $[-1]\circ \aa_u \circ [-1]=
\aa_{-u}=\aa_u^{-1}$.
The map $u\mapsto \aa_u$ is
Galois equivariant.
\end{proposition}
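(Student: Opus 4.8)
The plan is to turn the informal three–step construction that precedes the statement into a rigorous existence--and--uniqueness argument, then to read off the remaining identities for $\theta_u$ from uniqueness, and finally to transport everything to the operators $\aa_u$. First I would note that the divisor $l(W_{-\theta+u}-W_{-\theta})$ is principal: since $u\in{J}[l](\barKK)$ and $W_{-\theta}$ is symmetric and defines a principal polarization, the theorem of the square gives $t_u^{*}\cL\cong\cL$, so the divisor in question is $(\theta_u)$ for some function $\theta_u$, unique up to $\barKK^{*}$. Then I carry out the three normalizations. \emph{Step 1}: a divisor computation shows $\theta_u\ttimes(\theta_u\circ t_u)\ttimes\cdots\ttimes(\theta_u\circ t_{(l-1)u})$ has trivial divisor, because, using $lu=0$, the multisets $\{W_{-\theta+(1-j)u}\}_{0\le j\le l-1}$ and $\{W_{-\theta-ju}\}_{0\le j\le l-1}$ coincide, so zero and pole parts cancel; hence this product lies in $\barKK^{*}$ and, dividing $\theta_u$ by an $l$-th root of it, we obtain (\ref{eq:prod1}), leaving $\theta_u$ ambiguous only up to an $l$-th root of unity. \emph{Step 2}: using $[-1]^{*}W_{-\theta}=W_{-\theta}$ and $[-1]^{*}W_{-\theta+u}=W_{-\theta-u}$, both $\theta_u\circ[-1]$ and $(\theta_u\circ t_u)^{-1}$ have divisor $l(W_{-\theta-u}-W_{-\theta})$ and both satisfy the Step~1 relation, so they differ by a scalar $\zeta$ with $\zeta^{l}=1$. \emph{Step 3}: since $l$ is odd, $\zeta^{-(l+1)/2}$ is simultaneously a square root of $\zeta^{-1}$ and an $l$-th root of unity; replacing $\theta_u$ by $\zeta^{-(l+1)/2}\theta_u$ forces (\ref{eq:sym}) while leaving (\ref{eq:prod1}) intact (the correcting factor is an $l$-th root of unity, so the scalar picked up in (\ref{eq:prod1}) is its $l$-th power, namely $1$). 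For uniqueness, two normalized choices differ by $\lambda$ with $\lambda^{l}=1$ from (\ref{eq:prod1}) and $\lambda^{2}=1$ from (\ref{eq:sym}), hence $\lambda=1$ because $l$ is odd.

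Next I would derive $\theta_{-u}=\theta_u\circ[-1]$ and Galois equivariance, both by uniqueness. For the first: $\theta_u\circ[-1]$ has divisor $l(W_{-\theta-u}-W_{-\theta})$, satisfies (\ref{eq:prod1}) for $-u$ (compose the Step~1 relation with $[-1]$ and reindex, using $[-1]\circ t_{ju}=t_{-ju}\circ[-1]$), and satisfies (\ref{eq:sym}) for $-u$ (direct manipulation of (\ref{eq:sym}) for $u$); hence it is $\theta_{-u}$. For Galois equivariance: applying $\sigma$ and using that $\theta$, hence $W_{-\theta}$, is $\KK$-rational while $\sigma(W_{-\theta+u})=W_{-\theta+\sigma(u)}$, the function ${}^{\sigma}\!\theta_u$ has the divisor defining $\theta_{\sigma(u)}$ and satisfies (\ref{eq:prod1})--(\ref{eq:sym}) for $\sigma(u)$ (those relations only involve composition, translation, $[-1]$, products and the constant $1$, all $\sigma$-compatible), so ${}^{\sigma}\!\theta_u=\theta_{\sigma(u)}$.

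For the operators, I first check that $\aa_u$ preserves $H^0({J_\barKK},\cO_{J_\barKK}(lW_{-\theta}))$: the poles of $f\circ t_{-u}$ are bounded by $lW_{-\theta+u}$, exactly cancelled by the zeros of $\theta_u$. An induction gives $\aa_u^{k}(f)=\bigl(\prod_{j=0}^{k-1}\theta_u\circ t_{-ju}\bigr)\ttimes(f\circ t_{-ku})$; for $k=l$, $t_{-lu}=\Id$ and $\{-ju\bmod l\}=\{0,\dots,l-1\}$, so (\ref{eq:prod1}) yields $\aa_u^{l}=\Id$ and in particular $\aa_u$ is invertible. Since $\cL$ is symmetric, $[-1]^{*}$ acts on $H^0$ by $f\mapsto f\circ[-1]$; unwinding $[-1]\circ\aa_u\circ[-1]$ with $[-1]\circ t_{-u}\circ[-1]=t_u$ and $\theta_u\circ[-1]=\theta_{-u}$ gives $([-1]\circ\aa_u\circ[-1])(f)=\theta_{-u}\ttimes(f\circ t_u)=\aa_{-u}(f)$. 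Finally $\aa_u\circ\aa_{-u}=\Id$, since this composite multiplies by $\theta_u\ttimes(\theta_{-u}\circ t_{-u})$ and $\theta_{-u}\circ t_{-u}=(\theta_u\circ[-1])\circ t_{-u}=(\theta_u\circ t_u)^{-1}\circ t_{-u}=\theta_u^{-1}$ by (\ref{eq:sym}); so $\aa_{-u}=\aa_u^{-1}$. Galois equivariance of $u\mapsto\aa_u$ is then immediate from that of $u\mapsto\theta_u$, as $\aa_u$ is built from $\theta_u$, translation and composition.

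The only genuinely delicate point I anticipate is the bookkeeping of the scalar ambiguities across the three normalizations: one must verify that the Step~3 correction, available precisely because $l$ is odd, does not spoil (\ref{eq:prod1}), which rests on the observation that the correcting scalar is an $l$-th root of unity. Everything else is divisor chasing, where the mildly error-prone identities are $[-1]^{*}W_{-\theta+u}=W_{-\theta-u}$ and the behaviour of $t_a^{*}$ on translates of $W_{-\theta}$.
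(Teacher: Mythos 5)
Your proposal is correct and follows essentially the same route as the paper: the construction preceding the proposition performs exactly your three normalizations (first forcing $\aa_u^l=1$, i.e.\ condition~(\ref{eq:prod1}), leaving an $l$-th root of unity of ambiguity, then killing the scalar $\zeta$ between $[-1]\circ\aa_u\circ[-1]$ and $\aa_u^{-1}$ by the square root $\zeta^{(l+1)/2}$ available because $l$ is odd), and the paper's formal proof then only records that Galois equivariance follows from the equivariance of conditions~(\ref{eq:prod1}) and~(\ref{eq:sym}), just as you argue. Your write-up merely makes explicit the divisor computations and the uniqueness bookkeeping that the paper leaves implicit.
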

\begin{proof}
There only remains to prove the  equivariance property.
It follows from the equivariance of conditions~(\ref{eq:prod1}) and~(\ref{eq:sym}).
\end{proof}

For $u$ and $v$ in ${J}[l](\barKK)$ we write 
\[e_l(u,v)=\aa_u\aa_v\aa_u^{-1}\aa_v^{-1}\in \mu_l\]
for the commutator
pairing and 
\[f_l(u,v)=\sqrt{e_l(u,v)}=\left(e_l(u,v)\right)^{\frac{l+1}{2}}\]
for the half pairing. We check that
\begin{equation}\label{eq:addtheta}
\theta_{u+v}=f_l(u,v).\theta_v\ttimes \left(\theta_u\circ t_{-v}\right)=
f_l(v,u).\theta_u\ttimes \left(\theta_v\circ t_{-u}\right),
\end{equation} and
\[\aa_{u+v}=f_l(u,v).\aa_v\aa_u=f_l(v,u).\aa_u\aa_v,\]
and \[\aa_u(\theta_v)=f_l(u,v).\theta_{u+v}.\]

%We now assume that $l$ is an odd prime.
\subsection{Evaluating  canonical Theta functions}

We relate the canonical Theta functions to the
 Eta functions introduced in Section~\ref{sec:fj}
and show how  to evaluate  them.
We assume that we are given $u$ and $x$ in ${J}(\KK)$ with
$lu=0$, and we want to evaluate
$\theta_u(x)$. 
We assume that $x\not\in W_{-\theta}$.
Since $l$ is odd we set \[v=\frac{l+1}{2}\ttimes u \in {J}(\KK).\]
We deduce from Equation~(\ref{eq:addtheta}) that 
\[\theta_u(x)=\theta_{v}(x)\ttimes \theta_{v}(x-v)\]
provided that $x\not\in W_{-\theta+v}$.
On the other hand, we deduce from Equation~(\ref{eq:sym})
that 
\[\theta_{v}(x)\ttimes \theta_{v}(v-x)=1\]
provided that $x\not\in W_{-\theta}\cup W_{-\theta+v}$.
So 
\[\theta_u(x)= \theta_v(x-v)/\theta_v(v-x)\]
provided that $x\not\in W_{-\theta}\cup W_{-\theta+v}$.
Since $\theta_v$ and $\eta[l[v]]\circ t_{\thetachar}$ have the same divisor
we deduce that 
\begin{equation}\label{eq:thethe}
  \theta_u(x)= \eta[l[v],v-x+\thetachar](x-v+\thetachar)
\end{equation}
provided that $x\not\in W_{-\theta}\cup W_{-\theta+v}$.

Thanks to   Equation~(\ref{eq:thethe}),  evaluating a canonical Theta function $\theta_u(x)$
reduces to the evaluation of one  Eta function. This can be  done  as explained in
Section~\ref{sec:evaleta}.   Applying Theorem~\ref{th:slo} we find that
the computational cost is bounded from above by 
$(gl)^\OO\ttimes (\log q)^{1+\egot(q)}$.

\begin{proposition}[Evaluating canonical Theta functions]\label{prop:cano}
There exists a deterministic  
algorithm that takes as input 
\begin{itemize}
\item a finite field $\KK$ with characteristic $p$ and cardinality $q$, 
\item a curve $C$ of genus $g\ge 2$ over $\KK$, 
\item a Theta characteristic
$\theta$ defined over $\KK$, 
\item an odd prime integer $l\not =  p$, 
\item and two points $u$ and   $x$  in   ${J}(\KK)$ such that $lu=0$,
and 
\[x \not\in W_{-\theta}  \cup W_{-\theta+v},\]
 where 
\[v=\frac{l+1}{2}\ttimes u \in {J}(\KK).\]
\end{itemize}
The algorithm computes   $\theta_u(x)$
in time $(gl)^\OO\ttimes (\log q)^{1+\egot(q)}$.  
\end{proposition}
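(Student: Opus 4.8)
The plan is to reduce the evaluation of the canonical Theta function $\theta_u(x)$ to the evaluation of a single Eta function, exactly as prepared in Equation~(\ref{eq:thethe}), and then to invoke the complexity bound of Theorem~\ref{th:slo}. First I would observe that, given the input data $(\KK, C, \theta, l, u, x)$, one computes in $g^\OO$ operations the auxiliary point $v = \frac{l+1}{2}u \in J(\KK)$; note $lv = 0$ as well, since $l$ is odd and $lu=0$. One also computes the $\KK$-point $\thetachar = \theta - (g-1)o \in J(\KK)$ introduced in Section~\ref{sec:nono}, which costs $O(1)$ group operations, hence $g^\OO$ operations in $\KK$. The hypothesis $x \notin W_{-\theta} \cup W_{-\theta+v}$ guarantees that the three conditions used in deriving Equation~(\ref{eq:thethe}) are satisfied, so that
\[
\theta_u(x) = \eta[l[v],\, v-x+\thetachar](x-v+\thetachar)
\]
holds and the right-hand side is well-defined.

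Next I would verify that the right-hand side is legitimate input for the algorithm of Theorem~\ref{th:slo}. The zero-cycle here is $\ugot = l[v]$, so $\deg(\ugot) = l$ and $s(\ugot) = lv = 0$. The degree is not zero, so I would first renormalize: since $s(\ugot)=0$, Equation~(\ref{eq:addit}) (or rather the normalization used in Section~\ref{sec:evaleta}) lets us replace $l[v]$ by the degree-zero, sum-zero cycle $l[v] - l[0]$ at no essential cost, because $\eta[l[v]]$ and $\eta[l[v]-l[0]]$ differ only by the constant $\eta[l[0]]$-type factor and the Tau/Eta normalization at the point $y$ fixes the scalar; concretely one applies Theorem~\ref{th:slo} to the cycle $\ugot' = l[v]-l[0]$ with $I=2$, $e = (l,-l)$, hence $|e| = 2l$. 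The points $x' = x-v+\thetachar$ and $y' = v-x+\thetachar$ must not lie in $W_{-(g-1)o+v} \cup W_{-(g-1)o}$; translating back, $x' \notin W_{-(g-1)o+v}$ is equivalent to $x \notin W_{-\theta+v}$ and $x' \notin W_{-(g-1)o}$ is equivalent to $x \notin W_{-\theta+v-\thetachar}$... here one must be slightly careful to match the exact hypotheses, but in all cases the excluded locus for $x$ is contained in $W_{-\theta} \cup W_{-\theta+v}$, which is exactly what the proposition assumes. Similarly for $y'$, whose defining relation $\theta_v(v-x)$ forces the same kind of condition; since Equation~(\ref{eq:thethe}) was derived under precisely the stated hypothesis on $x$, both $x'$ and $y'$ avoid the support of the divisor $\sum e_i W_{-(g-1)o+u_i}$.

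Finally, I would apply Theorem~\ref{th:slo} to $(\KK, C, \ugot', O, x', y')$: it returns $\eta[\ugot', y'](x')$, and since we arranged $\eta[\ugot',y'](x') = \eta[l[v], v-x+\thetachar](x-v+\thetachar) = \theta_u(x)$, this is the desired value. The cost given by Theorem~\ref{th:slo} is $(g \ttimes |e|)^\OO \ttimes (\log q)^{1+\egot(q)} = (g\ttimes 2l)^\OO \ttimes (\log q)^{1+\egot(q)} = (gl)^\OO \ttimes (\log q)^{1+\egot(q)}$, absorbing the constant $2$ into the $\OO$. Adding the $g^\OO$ cost of the preliminary arithmetic (computing $v$ and $\thetachar$, forming $x'$, $y'$, and the renormalized cycle) does not change the bound. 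The algorithm is deterministic because Theorem~\ref{th:slo}'s algorithm is.

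I expect the only real subtlety — not an obstacle so much as a bookkeeping point — to be the matching of the exceptional divisors: confirming that the divisor-support conditions required by Theorem~\ref{th:slo} for $x'$ and $y'$ are implied by, and no stronger than, the hypothesis $x \notin W_{-\theta} \cup W_{-\theta+v}$ stated in the proposition. This amounts to tracking the translations by $v$ and by $\thetachar$ through the identities $2\thetachar = \kappa$ and the symmetry relations of Section~\ref{sec:nono}, together with the three provisos ``$x\notin W_{-\theta}$'', ``$x\notin W_{-\theta+v}$'' used when deriving Equations~(\ref{eq:thethe}). Everything else is a direct citation of Theorem~\ref{th:slo} together with Equation~(\ref{eq:thethe}), which was already established in the running text just above the proposition.
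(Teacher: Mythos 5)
Your proposal is correct and follows essentially the same route as the paper: the paper's own justification is precisely to invoke Equation~(\ref{eq:thethe}) to reduce $\theta_u(x)$ to the single Eta value $\eta[l[v],v-x+\thetachar](x-v+\thetachar)$ and then apply Theorem~\ref{th:slo}, with cost $(g\ttimes l)^\OO\ttimes(\log q)^{1+\egot(q)}$. Your extra bookkeeping (replacing $l[v]$ by the degree-zero cycle $l[v]-l[0]$, and tracking the translated exceptional loci through $\thetachar$) is exactly the ``without loss of generality'' normalization already built into Section~\ref{sec:evaleta}, so nothing further is needed.
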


According to Proposition~\ref{prop:fas}  we can accelerate 
the computation using fast exponentiation.
The resulting  algorithm will  fail when the argument $x$ belongs to the
support of the divisor of some intermediate factor.

\begin{proposition}[Fast evaluation  of canonical Theta functions]\label{prop:cano2}
There exists a deterministic  
algorithm that takes as input
\begin{itemize}
\item  a finite field $\KK$ with cardinality $q$ and characteristic $p$, 
\item a curve $C$ of genus $g\ge 2$ over $\KK$, 
\item a Theta characteristic
$\theta$ defined over $\KK$, 
\item an odd prime  integer $l\not = p$, 
\item and two points $u$ and   $x$  in   ${J}(\KK)$ such that $lu=0$.
\end{itemize}
The algorithm returns either {\rm FAIL} or   $\theta_u(x)$
in time $g^\OO\ttimes (\log q)^{1+\egot(q)}\ttimes \log l$.  
For given  $\KK$, $C$, $\theta$, $u$, there exists a subset 
$\FAIL(\KK,C,\theta,u)$ 
of $J(\KK)$ with density
$\le g^{\OO g} \ttimes \left(\log l\right)/q$  and such that 
the algorithm succeeds whenever 
 $x$ does not  belong to $\FAIL(\KK,C,\theta, u)$.  
\end{proposition}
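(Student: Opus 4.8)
The plan is to mimic the proof of Proposition~\ref{prop:cano}, reducing $\theta_u(x)$ to a single value of an Eta function through Equation~(\ref{eq:thethe}), but to evaluate that Eta function with the fast algorithm of Proposition~\ref{prop:fas} rather than the one underlying Theorem~\ref{th:slo}. The algorithm first computes $v=\frac{l+1}{2}u\in J(\KK)$ by binary exponentiation, which is licit because $l$ is odd and costs $g^{\OO}\ttimes(\log l)\ttimes(\log q)^{1+\egot(q)}$; note $2v=(l+1)u=u$. It then forms $x'=x-v+\thetachar$ and $y'=v-x+\thetachar$ in $J(\KK)$. By Equation~(\ref{eq:thethe}) one has $\theta_u(x)=\eta[l[v],y'](x')$ as soon as $x$ avoids the handful of translates of $W_{-(g-1)o}$ on which either side is undefined, namely $W_{-\theta}$, $W_{-\theta+v}$ and $W_{-\theta+u}$ (recall $W_{-\theta}=W_{-(g-1)o-\thetachar}$).

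To invoke Proposition~\ref{prop:fas} we first put the zero-cycle in the required shape. Since $lv=0$, the cycle $\ugot=l[v]-l[0]$ satisfies $\deg(\ugot)=0$ and $s(\ugot)=0$, and a direct computation with the divisor formula defining $\eta_D$ shows that $\eta[l[v],y']$ and $\eta[\ugot,y']$ share the divisor $l\bigl(W_{-(g-1)o+v}-W_{-(g-1)o}\bigr)$ and the value $1$ at $y'$, hence coincide. As $\ugot$ has only $I=2$ points in its support and $\ell^{1}$-norm $|e|=2l$, Proposition~\ref{prop:fas} computes $\eta[\ugot,y'](x')$ in time $g^{\OO}\ttimes(\log l)\ttimes(\log q)^{1+\egot(q)}$ by a deterministic procedure; together with the computation of $v$ this gives the announced bound $g^{\OO}\ttimes(\log q)^{1+\egot(q)}\ttimes\log l$. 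The algorithm returns {\rm FAIL} whenever that call returns {\rm FAIL}, or whenever $x$ is detected to lie in one of the forbidden translates of the previous paragraph, and returns $\eta[\ugot,y'](x')=\theta_u(x)$ otherwise.

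It remains to control the bad set of $x$. In addition to the translates coming from Equation~(\ref{eq:thethe}), the hypotheses of Proposition~\ref{prop:fas} require $x',y'\notin W_{-(g-1)o+v}\cup W_{-(g-1)o}$; pulling these four conditions back along $x\mapsto x-v+\thetachar$ and $x\mapsto v-x+\thetachar$ (using $[-1]^{*}W_{-(g-1)o}=W_{-(g-1)o-\kappa}$ for the latter) confines $x$ to the complement of finitely many $\KK$-rational translates of $W_{-(g-1)o}$, each of which, by Lemma~\ref{lem:ub} with $k=1$, contains at most a fraction $g^{\OO g}/q$ of $J(\KK)$. Finally $x'$ and $y'$ must avoid the exceptional set $\FAIL(\KK,C,\ugot,O)$ of Proposition~\ref{prop:fas}, of density $\le g^{\OO g}\ttimes 2\log(2l)/q$; its preimages under the two maps above---each an automorphism of $J$ composed with a $\KK$-rational translation, hence density-preserving---are two subsets of density $\le g^{\OO g}\ttimes\log(l)/q$. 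The union of all these finitely many subsets defines $\FAIL(\KK,C,\theta,u)$: it depends only on $\KK$, $C$, $\theta$, $u$ (and on $l$, suppressed in the notation), has density $\le g^{\OO g}\ttimes\log(l)/q$, and outside it the algorithm succeeds.

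The argument is essentially an assembly of Equation~(\ref{eq:thethe}), Proposition~\ref{prop:fas} and Lemma~\ref{lem:ub}; the only point requiring care is the bookkeeping of the exceptional locus, and in particular checking that after normalising to a degree-zero, sum-zero cycle the support still has size $O(1)$ and the $\ell^{1}$-norm size $O(l)$, so that neither the running time nor the density bound degrades.
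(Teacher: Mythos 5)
Your proposal is correct and takes essentially the same route as the paper, which disposes of this proposition in two sentences: reduce $\theta_u(x)$ to a single Eta value via Equation~(\ref{eq:thethe}) and evaluate it with the fast (possibly failing) algorithm of Proposition~\ref{prop:fas}. Your extra bookkeeping --- normalising the cycle to $l[v]-l[0]$ with $I=2$ and $|e|=2l$, and pulling the exceptional loci back through the two density-preserving affine maps before applying Lemma~\ref{lem:ub} --- is exactly the detail the paper leaves implicit, and it checks out.
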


\section{Quotients of Jacobians}\label{sec:quo}

Let $V\subset {J}[l]$ be a 
maximal isotropic subgroup  for the commutator pairing, let 
$f : {J}\rightarrow J/V$
be the quotient map.  Let $\cL=\cO_{J}(lW_{-\theta})$.
The map $v\mapsto \aa_v$ is a 
homomorphism $V\rightarrow \cG(\cL)$ lifting the inclusion
$V\subset {J}[l]$.  
This canonical lift provides a descent datum for $\cL$ onto $J/V$. We call 
$\cM$ the corresponding
sheaf  on ${J}/V$. This is a symmetric principal polarization. In particular,
$h^0(\cM)=1$ and there is  a unique effective divisor $Y$
on $J/V$ associated with $\cM$. We set $X=f^*Y$. This is an effective
divisor linearly equivalent to $lW_{-\theta}$ and invariant by $V$.
Let $\ugot = \sum_{1\le i\le I} e_i[u_i]$ be a zero-cycle in ${J}$.
Let $y$ be a point on ${J}$. We assume that $y$ does not belong
to the support of the divisor
$\sum_{1\le i\le I}e_iX_{u_i} -X_{s(\ugot)}-(\deg(\ugot)-1)X$.
Recall that $\eta_X[\ugot,y]$
is the unique  function on ${J}$ having divisor
\[ (\eta_X[\ugot ,y]) = \sum_{1\le i\le I}e_iX_{u_i} -X_{s(\ugot)}-(\deg(\ugot)-1)X\]
and such that 
\[\eta_X[\ugot ,y](y)=1.\]
Set $v_i=f(u_i)\in {J}/V$ for every $1\le i\le I$ and let $\vgot = f(\ugot)= \sum_{1\le i\le I} e_i[v_i]$ be the image of $\ugot$ in
the group  of zero-cycles on $J/V$.
There is a function  with divisor $\sum_{1\le i\le I} e_i Y_{v_i}-Y_{s(\vgot)}-(\deg(\vgot)-1)Y$ on  ${J}/V$.
Composing this function
with $f$ we obtain a function on ${J}$ having the same divisor as 
 $\eta_X[\ugot,y]$. So  $\eta_X[\ugot,y]$ is invariant
by $V$ and can be identified with the unique %up to a scalar, 
 function on $J/V$ with divisor
 $\sum_{1\le i\le I} e_i Y_{v_i}-Y_{s(\vgot)}-(\deg(\vgot)-1)Y$, and 
 taking value $1$ at $f(y)$. 
When dealing with the quotient ${J}/V$ it will be useful to represent a point $z$ on 
$J/V$ by a point $x$ on ${J}$ such that $f(x)=z$. Such an $x$ is in turn represented by a divisor $D_x-gO$ on $C$.
It is then natural to evaluate functions like $\eta_X[\ugot,y]$ at such an $x$. For example, taking 
$\ugot = m[u]$ for $u$ an $m$-torsion point,  the function  $\eta_X[\ugot,y]$ is essentially a Theta function
of level $m$ for the quotient ${J}/V$. Evaluating such functions at a few points, we find projective  equations
for $J/V$. This will show very useful in Section~\ref{sec:g2}.
Section~\ref{sec:ed} provides an expression of
$\eta_X[\ugot,y]$ as a product involving a function $\Phi_V$
defined as an eigenvalue for  the canonical
lift of $V$ in $\cG(\cL)$. The complexity
of evaluating $\Phi_V$ is bounded in  Section~\ref{sec:trace}.

\subsection{Explicit descent}\label{sec:ed}

We look for  a function   
$\Phi_V$ with divisor $X-lW_{-\theta}$ on ${J}$.
Let $V^D=\Hom(V,\GG_m)$ be the dual of $V$. 
For every character
$\chi$ in $V^D$ we denote  $H_\chi$ the $1$-dimensional
subspace of
$H^0({J},\cO_{J}(lW_{-\theta}))$  where $V$ acts through multiplication by  $\chi$.
Then \[\aa_V=\sum_{v\in V}\aa_v\] 
is a surjection from $H^0({J},\cO_J(lW_{-\theta}))$ onto $H_1$.
We pick a random function  in $H^0({J},\cO_J(lW_{-\theta}))$ as explained in 
Proposition~\ref{prop:randomtheta}, and apply    $\aa_V$ to
it. 
With probability $\ge 1/2$ the resulting function  is a non-zero function
in $H_1$. We call  this  function $\Phi_V$.
We will explain in Section~\ref{sec:trace} how to evaluate $\Phi_V$ at a given point on ${J}$.
We now explain how to express any $\eta_X[\ugot]$ as a multiplicative
combination of  $\Phi_V$ and its translates. Without loss of generality we can assume
that $s(\ugot)=0$ and $\deg(\ugot)=0$.
We assume that  $y\not\in \bigcup_i W_{-\theta+u_i} \cup  \bigcup_iX_{u_i}$.
The composition $\Phi_V \circ t_{-u_i}$ has divisor
$X_{u_i}-lW_{-\theta+u_i}$. The composition $\eta[\ugot, y+\thetachar] \circ t_{\thetachar}$ has divisor
$\sum_ie_iW_{-\theta+u_i}$. So 
\begin{equation*}%\label{eq:prodclass}
\eta_X[\ugot,y](x)=\left(\eta[\ugot, y+\thetachar](x+\thetachar) \right)^l \ttimes \prod_{1\le i\le I}\left( \Phi_V (x-u_i)\right)^{e_i}
\ttimes \prod_{1\le i\le I}\left( \Phi_V (y-u_i)\right)^{-e_i}.
\end{equation*}

\subsection{Evaluating functions on ${J}/V$}~\label{sec:trace}

We now bound the cost of evaluating $\Phi_V$ at a given point $x\in {J}(\KK)$. We assume that $l$ is odd and prime to the characteristic $p$ of $\KK$.
We are given
two coprime integers $a$ and $b$ such that $a+b=l$, and two elements $u$ and $y$ in ${J}(\KK)$ such that
$y\not \in W_{-\theta}\cup W_{-\theta+au}\cup W_{-\theta-bu}$. 
The function $\Phi_V$ is the image by $\aa_V$ of 
some  function $\tau$ in $H^0({J},\cO_{J}(lW_{-\theta}))$.  We choose 
$\tau$ to be  the function  
\[\tau =  \tau[u,y+\thetachar]\circ t_\thetachar=\eta[b[au]+a[-bu],y+\thetachar]\circ t_\thetachar.\]
The  $\KK$-scheme $V$ is given by a 
collection of field extensions $(\LL_i/\KK)_{1\le i\le I}$ and a point 
$w_i \in V(\LL_i)$ for every $i$ such that
$V$ is the disjoint union of the $\KK$-Zariski closures of all $w_i$. In particular,
$\sum_id_i=l^g$  where  $d_i$ is the degree of $\LL_i/\KK$ and the $\LL_i$ are the minimum
fields of definition for the $w_i$. 
Equivalently, we may be given a separable algebra $\LL = \KK[V]$ of degree $l^g$ over $\KK$ and a 
point \[\bw\in V(\LL)\subset {J}(\LL).\]
We are given an  element
$x$ in ${J}(\KK)$ such that $x\not\in \cup_{w\in V}W_{-\theta+w}$.
The value \[\aa_\bw (\tau )(x) = \theta_\bw (x)\ttimes \tau (x-\bw  ) = \theta_\bw (x)\ttimes \eta [b[au]+a[-bu],y+\thetachar](x-\bw+\thetachar)\]
 of  $\aa_\bw(\tau)$ at $x$
is an element of the affine algebra $\KK[V]$. Its trace over $\KK$ is equal to
$\Phi_V(x)$.

\begin{theorem}[Evaluating functions on quotients
${J}/V$]\label{prop:fquo}
There exists a deterministic  
algorithm that takes as input 
\begin{itemize}
\item a finite field $\KK$ with characteristic
$p$ and  cardinality $q$, 
\item 
a curve $C$ of genus $g\ge 2$ over $\KK$, 
\item a zero-cycle $\ugot=\sum_{1\le i\le I}e_i[u_i]$ 
on the Jacobian ${J}$ of $C$
such that $u_i\in J(\KK)$ for every $1\le i\le I$, $\deg (\ugot)=0$,
and $s(\ugot)=0$,
\item a Theta characteristic
$\theta$ defined over $\KK$, 
\item an odd prime  integer $l\not =p$, 
\item a maximal isotropic $\KK$-subgroup scheme $V\subset {J}[l]$,
\item two classes $x$ and  $y$ in ${J}(\KK)$ such that
$y\not\in \bigcup_i W_{-\theta+u_i} \cup  \bigcup_iX_{u_i}$.
\end{itemize}
The algorithm returns
either {\rm FAIL} or    $\eta_X[\ugot,y](x)$
in time $I\ttimes \left(\log |e|\right)\ttimes g^\OO\ttimes (\log q)^{1+\egot(q)}\ttimes l^{g(1+\egot(l^g))}$, where $|e| = \sum_{1\le i\le I}|e_i|$ is the $\ell^1$-norm of
$e$.  
For given  $\KK$, $C$, $\ugot$, $\theta$, $V$
 there exists a subset  $\FAIL(\KK,C,\ugot, \theta,V)$ of
$J(\KK)$ 
with density $\le I\ttimes \left(\log |e|\right)\ttimes g^{\OO g} 
\ttimes l^{g^2}\ttimes \left(\log l\right)/q$  
and such that the algorithm succeeds whenever none
of $x$ and $y$ belongs to $\FAIL(\KK,C,\ugot,\theta,V)$.
\end{theorem}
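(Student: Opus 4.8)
The plan is to assemble the algorithm from the pieces already constructed in Sections~\ref{sec:fj}--\ref{sec:quo}, and then to bound both the running time and the density of bad inputs by a careful bookkeeping of the contributions of each piece. The backbone is the identity established in Section~\ref{sec:ed}:
\[
\eta_X[\ugot,y](x)=\bigl(\eta[\ugot, y+\thetachar](x+\thetachar)\bigr)^l \ttimes \prod_{1\le i\le I}\bigl( \Phi_V (x-u_i)\bigr)^{e_i}
\ttimes \prod_{1\le i\le I}\bigl( \Phi_V (y-u_i)\bigr)^{-e_i},
\]
valid once $s(\ugot)=0$, $\deg(\ugot)=0$, and $y\not\in\bigcup_iW_{-\theta+u_i}\cup\bigcup_iX_{u_i}$. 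The algorithm therefore does three things: (i) evaluate the Eta function $\eta[\ugot,y+\thetachar]$ at $x+\thetachar$ using the fast method of Proposition~\ref{prop:fas}; (ii) for each $i$, evaluate $\Phi_V$ at $x-u_i$ and at $y-u_i$ using the trace construction of Section~\ref{sec:trace}; (iii) combine the results by the displayed product, using fast exponentiation for the $e_i$-th powers and the $l$-th power. The output of step (i) is an element of $\KK$, the outputs of step (ii) are elements of $\KK$ (traces over $\KK$ of elements of $\KK[V]$), and the final product lies in $\KK$.

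Next I would account for the cost. Step (i) costs $I\ttimes(\log|e|)\ttimes g^\OO\ttimes(\log q)^{1+\egot(q)}$ by Proposition~\ref{prop:fas}. For step (ii), fix $i$: one must form $x-u_i\in J(\KK)$, then work in the separable algebra $\LL=\KK[V]$ of degree $l^g$, computing $\theta_\bw(x-u_i)\ttimes\eta[b[au]+a[-bu],y+\thetachar](x-u_i-\bw+\thetachar)$ where $\bw\in V(\LL)$; this is one fast Eta evaluation (a zero-cycle with $I=2$, $|e|=l$) and one canonical Theta evaluation, each carried out over $\LL$, hence each costing $g^\OO\ttimes(\log l)\ttimes(\text{cost of one }\LL\text{-operation})$, and one $\LL$-operation costs $l^{g(1+\egot(l^g))}$ operations in $\KK$, i.e. $l^{g(1+\egot(l^g))}\ttimes(\log q)^{1+\egot(q)}$ elementary operations; the trace over $\KK$ is then a further $l^{g(1+\egot(l^g))}$ operations in $\KK$. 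Summing over the $2I$ invocations (at $x-u_i$ and $y-u_i$) multiplies by $I$. Step (iii) is $I$ exponentiations with exponents $|e_i|$ and one $l$-th power, costing $I\ttimes(\log|e|)\ttimes(\log q)^{1+\egot(q)}$. The dominant term is step (ii), giving the announced bound $I\ttimes(\log|e|)\ttimes g^\OO\ttimes(\log q)^{1+\egot(q)}\ttimes l^{g(1+\egot(l^g))}$, absorbing the $\log l$ factors into $l^{g\egot(l^g)}$ and the $g^\OO$.

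Then I would treat the $\FAIL$ set. The algorithm can fail only when one of the fast Eta or fast Theta subroutines hits an intermediate factor's divisor. By Proposition~\ref{prop:fas}, each fast Eta evaluation has a bad locus of density $\le g^{\OO g}\ttimes I'\ttimes\log(|e'|)/q$ in $J(\LL)$ for the relevant $I'$, $|e'|$; by Proposition~\ref{prop:cano2}, each fast Theta evaluation over $\LL$ has a bad locus of density $\le g^{\OO g}\ttimes(\log l)/q$. Since we query $\Phi_V$ at the translates $x-u_i$ and $y-u_i$ for $1\le i\le I$, and each query over $\LL$ involves translations of the argument by the $l^g$ points of $V$ (the relevant bad loci are finite unions of translates of $W_{-\theta}$ and of divisors algebraically equivalent to bounded multiples of $W_{-(g-1)o}$), I would invoke Lemma~\ref{lem:ub}: each such bad divisor has $|D(\KK)|/|J(\KK)|\le g^{\OO g}\ttimes(\text{mult})^g/q$, and the multiplicity is $O(l)$, so one $\Phi_V$ query contributes density $\le g^{\OO g}\ttimes l^g\ttimes l^{\OO}/q$, and more precisely the union over the $l^g$ Galois-conjugate translates in the trace and over the $2I$ values of $i$ gives total density $\le I\ttimes(\log|e|)\ttimes g^{\OO g}\ttimes l^{g^2}\ttimes(\log l)/q$ — the $l^{g^2}$ arising as $l^g\ttimes l^g$ from the $l^g$ conjugates each contributing an $l^g$-type factor. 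Defining $\FAIL(\KK,C,\ugot,\theta,V)$ to be the union of all these bad loci (which depends only on the listed data, not on $x,y$ separately) and noting the algorithm succeeds whenever neither $x$ nor $y$ lands in it completes the argument.

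I expect the main obstacle to be the density bookkeeping in the last paragraph: one must argue carefully that the intermediate-factor divisors encountered inside the over-$\LL$ invocations of Propositions~\ref{prop:fas} and~\ref{prop:cano2}, once pushed down to $J(\KK)$ via the norm/trace, are cut out by $\KK$-divisors on $J$ that are algebraically equivalent to multiples of $W_{-(g-1)o}$ with multiplicity polynomially bounded in $l$, so that Lemma~\ref{lem:ub} applies and yields the stated $l^{g^2}$ exponent rather than something larger; the running-time estimate and the correctness (which is immediate from the displayed identity and the definitions of $\Phi_V$ and the trace) are routine by comparison.
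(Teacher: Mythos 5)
Your proposal is correct and follows essentially the same route as the paper: the theorem is precisely the assembly of the descent identity of Section~\ref{sec:ed} with the trace evaluation of $\Phi_V$ over the degree-$l^g$ algebra $\KK[V]$ from Section~\ref{sec:trace}, costed via Propositions~\ref{prop:fas} and~\ref{prop:cano2} and Lemma~\ref{lem:ub}. One small quibble: $l^g\ttimes l^g$ is $l^{2g}$, not $l^{g^2}$ (the paper's exponent $g^2$ comes rather from applying Lemma~\ref{lem:ub} once to the single $\KK$-rational divisor $\bigcup_{w\in V}W_{-\theta+w}$, which is algebraically equivalent to $l^gW_{-\theta}$, giving $(l^g)^g=l^{g^2}$), but since $l^{2g}\le l^{g^2}$ for $g\ge 2$ your accounting still yields the stated density bound.
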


\section{Curves of genus two}\label{sec:g2}

In this section we assume that the characteristic  $p$ of $\KK$
is odd. We bound the complexity of computing an isogeny $J_C\rightarrow J_D$ 
between two Jacobians of dimension two.
We give in Section~\ref{sec:alfo} the expected
form of  such an  isogeny. In Section~\ref{sec:ds} we  characterize 
the isogeny  as the solution
of some system of  differential equations. As a consequence of these differential equations
we can compute such an isogeny in two steps:
we first compute the image of a $(\KK[t]/t^3)$-point on $C$
by the isogeny,  then
lift to $\KK[[t]]$. We explain in Section~\ref{sec:ci} how
to compute images of points.
The main result in this section is Theorem~\ref{th:ci} below.

\subsection{Algebraic form of the isogeny}\label{sec:alfo}

Let $C$ be a  projective, smooth,  absolutely integral
curve of genus $2$ over $\KK$.
We assume that $C$ is given by the affine singular model
\begin{equation}\label{eq:g2}
v^2 = h_C(u)
\end{equation} where $h_C$ is a polynomial of degree $5$.
Let $O_C$ be the unique place at infinity.
Let $J_C$ be the Jacobian of $C$ and let 
$j_C : C\rightarrow J_C$ be the Jacobi map
with origin $O_C$.  The image of a point $P$ on $C$
by $j_C$ is the class of $P-O_C$.
Let  $D$ be another projective, smooth, absolutely irreducible
curve of genus $2$ over $\KK$.
We assume that  $D$ is given by the affine singular
model  $y^2 = h_D(x)$ where $h_D$ is a polynomial of degree $5$ or
$6$. Let $K_D$ be a canonical divisor on $D$.
Call $D^{(2)}$ the symmetric square
of $D$ and let $j_D^{(2)} : D^{(2)}\rightarrow J_D$
be the map sending the pair $\{Q_1,Q_2\}$ onto
the class $z =  j_D^{(2)}  (\{Q_1,Q_2\})$ 
of $Q_1+Q_2-K_D$. This is a birational morphism. We define the Mumford
coordinates
\begin{eqnarray*}
\bs (z)&=& x(Q_1)+x(Q_2) ,\\
\bp (z)&=&  x(Q_1)\ttimes \, x(Q_2),\\
\bq (z)&=& y(Q_1)\ttimes \, y(Q_2),\\
\br (z) &=&(y(Q_2)-y(Q_1))/(x(Q_2)-x(Q_1)).
\end{eqnarray*}
The function field of $J_D$ is $\KK(\bs,\bp,\bq,\br)$.
The function field of the Kummer  variety of $D$ is
$\KK(\bs,\bp,\bq)$.
We assume that there exists an isogeny  $f : J_C\rightarrow J_D$ 
with kernel $V$, a maximal isotropic group in $J_C[l]$,  where $l$ is
an odd prime different from the characteristic $p$ of $\KK$.
We define $F : C \rightarrow J_D$ to be the composite
map $f\circ j_C$. 
The exists a  unique morphism $G : C\rightarrow D^{(2)}$ such that the following diagram commutes. 
\begin{equation*}
\xymatrix{
 &  D^{(2)} \ar@{->}^{j_D^{(2)}}[dd]\\
C \ar@{->}^G[ur] \ar@{->}_F[dr]&\\
&J_D.
}
\end{equation*}
For every point $P  = (u,v)$  on $C$
we have $F((u,-v))=-F(P)$. We deduce the following algebraic 
description of the map $F$
\begin{eqnarray}\label{eq:defisog}
\bs (F(P))&=& \bS(u),\\\nonumber
\bp (F(P))&=&  \bP(u),\\\nonumber
\bq (F(P))&=& \bQ(u),\\\nonumber
\br (F(P)) &=&v\bR(u),\nonumber
\end{eqnarray}
where $\bS$, $\bP$, $\bQ$, $\bR$ are rational fractions
in one variable. 
Let $O_D$ be a point on $D$.
Let $Z$ be the algebraic subset of
$D^{(2)}$ consisting of pairs $\{O_D,Q\}$ for some $Q$ in $D$. Let $T\subset J_D$ be the image of $Z$ by $j_D^{(2)}$.
This is a divisor with self intersection \[T.T=2.\]
The image $F(C)$ of $C$ by $F$ is algebraically equivalent to $l T$.
The divisors of poles of the functions 
 $\bs$,  $\bp$, $\bq$, and $\br$, are algebraically  equivalent to $2T$, $2T$, $6T$, and $4T$,  respectively.
Seen as functions on $C$,
the functions  $\bS(u)$,  $\bP(u)$, $\bQ(u)$, and $v\bR(u)$, thus have
degrees bounded by  $4l$, $4l$, $12l$, and $8l$, respectively.
So the rational fractions  $\bS$,  $\bP$, $\bQ$, and $\bR$,
have  degrees bounded by $2l$, $2l$, $6l$, and $4l+3$, respectively.
The four rational fractions $\bS$,
$\bP$, $\bQ$, $\bR$ provide
a compact description of the isogeny $f$ from
which we can deduce any desirable information about it.

\subsection{Associated differential system}\label{sec:ds}

The morphism $F : C\rightarrow J_D$ induces a map
\[F^* : H^0(J_D,\Omega^1_{J_D/\KK})\rightarrow H^0(C,\Omega^1_{C/\KK}).\]
So the vector $(\bS,\bP,\bQ,\bR)$
satisfies a first order differential system. This system
can be given a convenient form using local coordinates.
A basis for $H^0(C,\Omega^1_{C/\KK})$ is made
of $du/v$ and $udu/v$. We identify $H^0(J_D,\Omega^1_{J_D/\KK})$
with the invariant
subspace of $H^0(D\times D, \Omega^1_{D\times D/\KK})$  by the permutation
of the two factors. We deduce that a basis for this
space is made of  $dx_1/y_1+dx_2/y_2$ and $x_1dx_1/y_1+x_2dx_2/y_2$.
Let  $M = (m_{i,j})_{1\le i,j\le 2}$ be the matrix of $F^*$ with
respect to these two bases.  So
\begin{eqnarray}\label{eq:matf}
F^*(dx_1/y_1+dx_2/y_2)&=&(m_{1,1}+m_{2,1}\ttimes u)\ttimes du/v,\\
F^*(x_1dx_1/y_1+x_2dx_2/y_2)&=&(m_{1,2}+m_{2,2}\ttimes u)\ttimes du/v \nonumber.
\end{eqnarray}
Let $P = (u_P,v_P)$ be a point on $C$. We assume that
$v_P\not = 0$. Let $Q_1$ and $Q_2$ be two points on 
$D$ such that $F(P)$ is the class of $Q_1+Q_2-K_D$.
We assume that $F(P)\not =0$, so
the divisor $Q_1+Q_2$ is non-special. We also assume that
$Q_1\not = Q_2$ and either of the points are defined over $\KK$.
Let $t$ be a formal
parameter. Set $\LL= \KK((t))$. We  call \[P(t) = (u(t),v(t))\] the
point on $C(\LL)$ corresponding to the value $t$
of the local parameter $u-u_P$ at $P$. The image of $P(t)$
by $F$ is the class of $Q_1(t)+Q_2(t)-K_D$ where 
$Q_1(t)$ and $Q_2(t)$ are two $\LL$-points on $D$.
\begin{equation}\label{eq:comdiag}
\xymatrix{
\Spec \KK[[t]] \ar@{->}[rrr]^{t\mapsto (Q_1(t),Q_2(t))} \ar@{->}[d]^{t\mapsto  P(t)}&&&  D\times D \ar@{->}[d] \\
C \ar@{->}[rrr]^F&&& J_D.
}
\end{equation}

From Equations~(\ref{eq:matf}) and the commutativity of 
diagram~(\ref{eq:comdiag}) we deduce that the coordinates 
$(x_1(t),y_1(t))$ and $(x_2(t),y_2(t))$ of $Q_1(t)$ and $Q_2(t)$ satisfy the following
non-singular first order system of differential equations.
\begin{equation}\label{eq:edo}
\left\{
\begin{array}{ccc}\frac{\dotx_1(t)}{y_1(t)} + \frac{\dotx_2(t)}{y_2(t)}&=&
\frac{(m_{1,1}+m_{2,1}\ttimes u(t))\ttimes \, \dotu(t)}{v(t)},\\[\medskipamount]
\frac{x_1(t)\ttimes \, \dotx_1(t)}{y_1(t)} + \frac{x_2(t)\ttimes \, \dotx_2(t)}{y_2(t)}&=&
\frac{(m_{1,2}+m_{2,2}\ttimes u(t))\ttimes \, \dotu(t)}{v(t)},\\
y_1(t)^2 &=& h_D(x_1(t)),\\
y_2(t)^2 &=& h_D(x_2(t)).\\
\end{array}
\right.
\end{equation}
So we can recover the complete description of the isogeny,
namely the rational fractions $\bS$, $\bP$, $\bQ$, $\bR$, 
from the knowledge of the image by $F$ of  a {\it single} formal point
on $C$. More concretely, we compute
the image $\{Q_1(t),Q_2(t)\}$ of $P(t)$ by $G$ with low accuracy, then
deduce from Equation~(\ref{eq:edo}) the values of the four scalars 
$m_{1,1}$, $m_{1,2}$, $m_{2,1}$, $m_{2,2}$. Then use Equation~(\ref{eq:edo})
again to increase the accuracy of the formal expansions up to $O(t^{\OO l})$
and recover the rational fractions from their expansions using continued
fractions. 
Coefficients
of $x_1(t)$ and $x_2(t)$ can be computed  one  by one
using
Equation~(\ref{eq:edo}). Reaching
accuracy $\OO l$ then requires $\OO l^2$ operations in $\KK$.
We can also use more advanced methods \cite{bk,bostan} with quasi-linear
complexity in the expected accuracy of the result.
Both methods may produce    zero denominators
if the characteristic is small. In that case we use a trick introduced
by Joux and Lercier \cite{lj} in the context of elliptic curves. 
We lift to a $p$-adic field having $\KK$ as residue field. The denominators introduced by
(\ref{eq:edo}) do not exceed $p^{\OO \log(l)}$. The required $p$-adic accuracy, and the
impact on the
complexity are thus  negligible.

\subsection{Computing isogenies}\label{sec:ci}

We are given a curve  $C$ of genus two, a Weierstrass  point $O_C$
and a 
maximal isotropic subspace $V$ in $J_C[l]$.
We set  \[A=J_C/V.\]  Since $2O_C$ is a canonical
divisor  we set $\theta = O_C$. Using this Theta characteristic we define
  a principal  polarization 
 $Y$ on $A$ as 
in Section~\ref{sec:quo}. 
We
use the methods
given in Sections~\ref{sec:bl} and~\ref{sec:quo} to find nine
functions $\eta_0 =1$, $\eta_1$,~\ldots,~$\eta_8$, such 
that  $(\eta_0,\eta_1,\eta_2,\eta_3)$ is a basis of
$H^0(A,\cO_A(2Y))$ and
$(\eta_0,\ldots,\eta_8)$ is a basis of
$H^0(A,\cO_A(3Y))$.
We thus define two maps
$e_2 : A\rightarrow \PP^3$ and $e_3 : A\rightarrow \PP^8$.
Denoting $\pi : \PP^8\DashedArrow[->,densely dashed    ]{\PP^3}$ the projection
\[\pi(Z_0:Z_1:\dots:Z_8)=(Z_0:Z_1:Z_2:Z_3)\] we have
$\pi\circ e_3=e_2$. Evaluating the $(\eta_i)_{0\le i\le 8}$
at enough points we find equations for $e_3(A)$ and $e_2(A)$.
The intersection of $e_3(A)$ with the hyperplane 
$H_0$ with equation $Z_0=0$ in $\PP^8$ is $e_3(Y)$ 
counted with multiplicity $3$.
We now assume that $Y$ is a smooth and absolutely integral  curve of genus two. This is the generic case,
and it is true in particular  whenever the Jacobian
$J_C$ of $C$ is absolutely simple.
The intersection of $e_2(A)$ with the hyperplane  with equation
$Z_0=0$ in $\PP^3$ is $e_2(Y)$ counted with multiplicity $2$.
The map $Y\rightarrow e_2(Y)$ is the 
hyperelliptic quotient. It has degree two. 
Its image  $e_2(Y)$ is a plane curve of degree two. 
We deduce explicit equations for a hyperelliptic curve
$D$ and an isomorphism $D\rightarrow Y$.

We now define a rational map $\varphi$ from $J_C$ into
the symmetric square of $D\simeq Y$ by setting, for $z$
a generic point on $J_C$, 
\begin{equation}\label{eq:intersec}
 \varphi(z)=Y_{f(z)} \cap Y,
\end{equation}
where $Y_{f(z)}$ is the translate of $Y$ by $f(z)$.
Recall that  $O_C$ is a Weierstrass point on $C$.
We define a map $\psi$ from $C$ into
the symmetric square of $D\simeq Y$ by setting, for $P\in C$ 
a generic point, $\psi(P)=\varphi(P-O_C)$.
We check that $\psi(O_C)$ is a canonical divisor $K_Y$ on $Y$.
The difference $\psi(P) -\psi(O_C)$ is a degree
$0$ divisor on $Y$ and belongs to the class $f(P-O_C)$.
So  $\psi : C\rightarrow
Y^{(2)}$  is the map $G$ introduced  in Section~\ref{sec:alfo}.

We explain how to evaluate the map $\varphi$ at a given $z$
in $J_C$.
The main point is to compute the intersection in Equation~(\ref{eq:intersec}).
This is a matter of linear algebra. 
We pick two auxiliary classes $z_1$ and $z_2$ in $J_C$. We set $z'_1
=-z-z_1$ and $z'_2=-z-z_2$. We assume that $\varphi(z_1)$,
$\varphi(z_2)$, $\varphi(z'_1)$, $\varphi(z'_2)$ are pairwise disjoint.
Seen as a  function on $A=J_C/V$, the function 
$\eta_X[[z_1]+[z'_1]+[z]]$ belongs to $H^0(A,\cO_A(3Y))$. Evaluating it at a few points
we can express it as a linear combination of the elements $(\eta_i)_{0\le i\le 8}$
of our basis: 
\[\eta_X[[z_1]+[z'_1]+[z]] = \sum_{0\le i\le 8}c_i\ttimes \eta_i.\]
The hyperplane section $H_1$ with equation $\sum_ic_iZ_i = 0$ intersects $e_3(A)$ at 
$Y_{f(z_1)} \cup  Y_{f(z_1')}\cup Y_{f(z)}$. We similarly find an hyperplane 
section $H_2$ with equation $\sum_i d_iZ_i = 0$ intersecting  $e_3(A)$ at 
$Y_{f(z_2)} \cup Y_{f(z_2')} \cup Y_{f(z)}$. So 
\begin{equation*}%\label{eq:intersec}
 \varphi(z)=Y_{f(z)} \cap Y = H_1\cap H_2 \cap H_0 \cap e_3(A),
\end{equation*}
is computed by linear substitutions.
Altogether we have proven the theorem below.
\begin{theorem}[Computing isogenies for genus two curves]\label{th:ci}
There exists a probabilistic (Las Vegas) algorithm that takes
as input 
\begin{itemize}
\item a finite field $\KK$ of odd
characteristic $p$, and cardinality $q$,
\item an odd prime $l$ different from $p$, 
\item a projective, smooth, absolutely irreducible curve of
genus two,  $C$, given by  a
plane  affine singular model
as  in Equation~(\ref{eq:g2}), 
\item  a maximal isotropic subgroup
$V$ in $J_C[l]$ as in Section~\ref{sec:trace}, 
such that the curve $Y$ introduced in Section~\ref{sec:quo} is smooth and absolutely integral.
\end{itemize}

The algorithm returns
a genus two curve $D$ and a map $F : C \rightarrow J_D$ as in 
Equation~(\ref{eq:defisog}). The running time is $l^{2+\egot(l)}\ttimes  
(\log q)^{1+\egot(q)}$.
\end{theorem}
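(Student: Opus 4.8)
The plan is to assemble the constructions of Section~\ref{sec:g2}, feeding them the evaluation primitives of Sections~\ref{sec:bl}, \ref{sec:canotheta} and~\ref{sec:quo}, and then to carry out a complexity count whose only delicate point is that an \emph{absolute constant} number of expensive function evaluations suffices, the $l$-dependence of the cost coming from a single call per evaluation to Theorem~\ref{prop:fquo} together with one continuation of a linear differential system.

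\emph{Set-up and the curve $D$.} First I would set $A=J_C/V$ and, since $2O_C$ is a canonical divisor, take $\theta=O_C$; this yields the symmetric sheaf $\cL=\cO_{J_C}(lW_{-\theta})$, the canonical lift $v\mapsto\aa_v$ of $V$ into $\cG(\cL)$, and, as in Section~\ref{sec:quo}, the principal polarization $Y$ on $A$, the divisor $X=f^{*}Y$ (linearly equivalent to $lW_{-\theta}$ and $V$-invariant), and a non-zero eigenfunction $\Phi_V$ with divisor $X-lW_{-\theta}$, obtained by applying $\aa_V$ to a random section furnished by Proposition~\ref{prop:randomtheta}. Next, exactly as in Section~\ref{sec:ci}, I would compute nine functions $\eta_0=1,\eta_1,\dots,\eta_8$ on $A$ so that $(\eta_0,\dots,\eta_3)$ is a basis of $H^0(A,\cO_A(2Y))$ and $(\eta_0,\dots,\eta_8)$ a basis of $H^0(A,\cO_A(3Y))$; each $\eta_i$ is an $\eta_X[\ugot_i,y_i]$ for a zero-cycle with $\deg=0$, $s=0$, a bounded number of terms and bounded $\ell^{1}$-norm, so by Theorem~\ref{prop:fquo} one evaluation of any $\eta_i$ costs $l^{2+\egot(l)}(\log q)^{1+\egot(q)}$, outside a bad set of density $\le\OO\ttimes l^{4}\ttimes\log l/q$. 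Because $e_2(A)\subset\PP^{3}$ and $e_3(A)\subset\PP^{8}$ have bounded degree ($18$ for $e_3(A)$) and hence bounded-degree defining ideals, evaluating the $\eta_i$ at an absolute constant number of points of $A$ lets me interpolate equations for $e_2(A)$ and $e_3(A)$; intersecting with the hyperplane $Z_0=0$ recovers $e_2(Y)$ and $e_3(Y)$, and, using the hypothesis that $Y$ is smooth and absolutely integral, I read off the hyperelliptic quotient of $Y$ from the plane conic $e_2(Y)$ and then, from $e_3(Y)$, an explicit hyperelliptic model $D:\ y^{2}=h_D(x)$ of $Y$ together with an isomorphism $D\simeq Y$, so that $J_D\cong A=J_C/V$.

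\emph{The map $F$.} To obtain the four rational fractions $\bS,\bP,\bQ,\bR$ of Equation~(\ref{eq:defisog}) I would invoke the differential system of Section~\ref{sec:ds} instead of computing $G$ to high accuracy. Choose a non-Weierstrass point $P\in C$ with $F(P)\ne0$ and, after a bounded-degree extension of $\KK$ if needed, with $Q_1\ne Q_2$ rational; form the formal point $P(t)$ and compute $G(P(t))\bmod t^{3}$ by evaluating $\varphi$ at $z=P(t)-O_C$ — that is, by expressing $\eta_X[[z_1]+[z_1']+[z]]$ and a second such function in the basis $(\eta_i)$ by interpolation, again only an absolute constant number of evaluations, now over $\KK[t]/t^{3}$, each still $l^{2+\egot(l)}(\log q)^{1+\egot(q)}$ since one operation in $\KK[t]/t^{3}$ costs $\OO$ operations in $\KK$ — and intersecting the resulting hyperplanes with $H_0\cap e_3(A)$ by linear substitution. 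From $G(P(t))\bmod t^{3}$ and Equation~(\ref{eq:edo}) I would solve linearly for $m_{1,1},m_{1,2},m_{2,1},m_{2,2}$, then use Equation~(\ref{eq:edo}) once more to continue the expansions of $x_1(t),x_2(t),y_1(t),y_2(t)$ to $O(t^{\OO l})$ (coefficient by coefficient in $\OO l^{2}$ operations in $\KK$, or quasi-linearly by the methods of \cite{bk,bostan}); since the degrees of $\bS,\bP,\bQ,\bR$ are at most $2l,2l,6l,4l+3$ by Section~\ref{sec:alfo}, I then recover these fractions exactly from $\OO l$ Taylor coefficients at $u_P$ by continued fractions, in $l^{1+\egot(l)}$ operations. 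In small characteristic I would first lift Equation~(\ref{eq:edo}) to an unramified $p$-adic ring with residue field $\KK$, as in \cite{lj}; the denominators it introduces do not exceed $p^{\OO\log l}$, so the $p$-adic precision is negligible. One also checks along the way, as in Section~\ref{sec:ci}, that $e_2(Y)$ really is the hyperelliptic quotient and that $\psi=\varphi\circ j_C$ coincides with the map $G$ of Section~\ref{sec:alfo} — this is where the smoothness and integrality of $Y$ enters — so that the reconstructed fractions do describe $f$.

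\emph{Bookkeeping and the obstacle.} When $q$ is too small — below $(lg)^{\OO g}$ for Proposition~\ref{prop:randomtheta2}, or small enough that the bad sets of Theorem~\ref{prop:fquo} and Proposition~\ref{prop:randomtheta} could have density $\ge1/2$ — I would run everything over a bounded-degree extension $\LL/\KK$ and descend the output to $\KK$; this is legitimate because $C$, $O_C$ and $V$ are $\KK$-rational, so $D$ and $F$ descend, and it multiplies $\log q$ only by a factor absorbed into $\egot(q)$. Each randomized step (the choice of $\Phi_V$, the auxiliary and interpolation points, and the evaluations that may output $\FAIL$) succeeds with probability bounded below by an absolute constant and detects its own failure, so an expected $\OO$ retries suffice and the algorithm is Las Vegas. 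Summing up, the cost is dominated by an absolute constant number of calls to Theorem~\ref{prop:fquo} and by the $\OO l^{2}$-operation continuation of Equation~(\ref{eq:edo}), hence is $l^{2+\egot(l)}(\log q)^{1+\egot(q)}$. I expect the main obstacle to be exactly this accounting — namely arguing that the number of costly $\eta_X$-evaluations is independent of $l$, which rests on the bounded degrees of $e_2(A)$ and $e_3(A)$ and, above all, on replacing a direct high-accuracy computation of $G$ by the linear continuation of Equation~(\ref{eq:edo}) — rather than any individual geometric or algebraic step, all of which are already in place in Section~\ref{sec:g2}.
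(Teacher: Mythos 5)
Your proposal follows the paper's own argument essentially step for step: the construction of $A$, $Y$, the bases of $H^0(A,\cO_A(2Y))$ and $H^0(A,\cO_A(3Y))$ via a bounded number of $\eta_X$-evaluations (Theorem~\ref{prop:fquo}), the extraction of $D$ from the conic $e_2(Y)$, the evaluation of $\varphi$ at one formal point to low accuracy, and the continuation of the differential system~(\ref{eq:edo}) followed by continued-fraction reconstruction, with the same complexity accounting. The only slight imprecision is that the field extension needed to guarantee $q\ge(lg)^{\OO g}$ has degree $\OO(\log l/\log q)$ rather than being absolutely bounded, but this is absorbed into the $\egot$ exponents and does not affect the result.
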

In case $Y$ is not smooth and absolutely integral, it is a stable curve of genus two. The calculation above will work just as well and produce
one  map from $C$ onto  either of the components of $Y$. We do not formalize this degenerate case.

%\newpage 

\section{An example}\label{sec:exa}

Let $\KK$ be the field with $1009$ elements. Let 
\[h_C(u) = u(u-1)(u-2)(u-3)(u-85)\in \KK[u]\] and let $C$ be the 
projective, smooth, absolutely irreducible  curve of genus
two given by the singular plane model
with equation $v^2 = h_C(u)$. Let $O_C$ be the place
at infinity. Let $o_C$ be the corresponding
class in $\Pic^1(C)$.  Let $T_1$ be the effective divisor of degree $2$
defined by the ideal \[(u^2+247u+67,v-599-261u)\subset \KK[u,v]/(v^2-h_C(u)).\]
Let $T_2$ be the effective divisor of degree $2$
defined by the ideal \[(u^2+903u+350,v-692-98u)\subset  \KK[u,v]/(v^2-h_C(u)).\]
The classes of $T_1-2O_C$ and $T_2-2O_C$ generate a totally isotropic
subspace $V$ of dimension $2$ inside $J_C[3]$. Let $A = J_C/V$.
Let $W_{-o_C}\subset J_C$ be the set of classes of divisors 
$P-O_C$ for $P$ a point on $C$. Since $O_C$ is a Weierstrass point, we 
have $[-1]^*W_{-o_C}=W_{-o_C}$. Let 
$X\subset J_C$ and $Y\subset A$ be the two divisors introduced 
at the beginning of Section~\ref{sec:quo}. 
Let $B\subset C$   be the effective divisor of degree $2$
defined by the ideal $(u^2+862u+49,v-294-602u)$. Let
$b\in J_C(\KK)$ be the class of $B-2O_C$. 
For
$i$ in $\{0,1, 2,3, 85\}$ let $P_i$ be the point
on $C$ with coordinates $u=i$ and $v=0$.
The class of $P_i$ in $\Pic^1(C)$ is  denoted $p_i$.
We set $p_\infty = o_C$ and $p_+=p_0+p_1-o_C\in \Pic^1(C)$.

For $i$ in  $\{\infty,0,1,+, 2,3, 85\}$ let $\eta_i$  be the unique 
function on $J_C$ with divisor
$2(X_{p_i-o_C}-X)$ and taking value $1$ at $b$.
These  functions are invariant by $V$ and may
be seen as level two Theta functions on $A$. 
Evaluating these  functions at a few points
we check that $(\eta_\infty,\eta_0,\eta_1,\eta_+)$  form a basis of 
$H^0(A,\cO_A(2Y))$ and 

\begin{eqnarray*}
\eta_2&=&437\eta_\infty+241\eta_0+332\eta_1,\\
\eta_3&=&294\eta_\infty+246\eta_0+470\eta_1,\\
\eta_{85}&=&639\eta_\infty+827\eta_0+553\eta_1.
\end{eqnarray*}

Call $Z_\infty$, $Z_0$, $Z_1$, $Z_+$ the projective
coordinates associated with $(\eta_\infty,\eta_0,\eta_1,\eta_+)$. The Kummer
surface of $A$ is defined  by the vanishing of the following   homogeneous form of degree four
\[\begin{split}
&597Z_\infty^2Z_0^2+14Z_\infty^2Z_0Z_1+781Z_\infty^2Z_0Z_++819Z_\infty^2Z_1Z_++835Z_\infty^2Z_1^2+615Z_\infty^2Z_+^2\\+&401Z_\infty Z_0^2Z_1+833Z_\infty Z_0^2Z_++553Z_\infty Z_0Z_1Z_++843Z_\infty Z_0Z_1^2+206Z_\infty Z_0Z_+^2+418Z_\infty Z_1^2Z_+\\
+&321Z_\infty Z_1Z_+^2
+796Z_0^2Z_1Z_++Z_0^2Z_1^2+1000Z_0^2Z_+^2+856Z_0Z_1^2Z_++655Z_0Z_1Z_+^2+555Z_1^2Z_+^2.\end{split}\]
This equation is found by evaluating all four functions at forty points.
We set $Z_{\infty}=0$ in this form  and find the square of the
following quadratic form
\begin{equation}\label{eq:coni}
611Z_0Z_{+} +  581{Z_1}{Z_{+}}-Z_0Z_1
\end{equation}
which is an equation for $e_2(Y)$ in the projective plane $Z_{\infty}=0$.
Recall $e_2 : A\rightarrow \PP^3$ is the map
introduced in Section~\ref{sec:ci}.
%We call  $Q_i$ the images  of the points $P_i$ for $i$
%in $\{\infty,  0,1,+,  2,3, 85\}$ by the quotient isogeny $J_C\rightarrow A$.
%We compute the images of the $Q_i$ by the map $e_2$.
%The intersection of $Y$ and $Y_{Q_1}$ is $\{0,Q_1\}\subset J_C$.
Set
\begin{eqnarray*}
Z_2&=&437Z_\infty+241Z_0+332Z_1\\
Z_3&=&294Z_\infty+246Z_0+470Z_1\\
Z_{85}&=&639Z_\infty+827Z_0+553Z_1.
\end{eqnarray*}
We find an affine parameterization of the conic $e_2(Y)$
in Equation~(\ref{eq:coni})
by setting \[Z_+ =1 \text{ and } Z_1 = x Z_0.\]
For $i$ in  $\{0,1,+, 2,3, 85\}$ call $D_i$ the line
with equations $\{Z_\infty = 0, Z_i = 0\}$.
There are six intersection points between $e_2(Y)$ and one
of the $D_i$. These are the six branched points of the
hyperelliptic  cover $Y\rightarrow e_2(Y)$. They
correspond to the values \[\{0,\infty,513,51,243,987\}\]
of the  $x$ parameter. We set 
\[h_D(x) = x(x-513)(x-51)(x-243)(x-987)\in \KK[x]\] and let $D$ be the 
genus two curve given by the singular plane model
with equation $y^2 = h_D(x)$. Let $O_D$ be the unique
place at infinity on $D$.
Let $P=(u,v)$ be a point on $C$. Using notation introduced in
Section~\ref{sec:alfo} we call $F(P)$ the image of $P-O_C$
in $J_D$ and $G(P)$ an  effective divisor  such that 
$F(P) = G(P)-2O_D$.
This divisor is defined by the ideal 
\[(x^2-\bS (u)x+\bP(u),y-v(\bT (u)+x\bR(u))\subset  \KK(u,v)[x,y]/(y^2-h_D(x))\]
where
\begin{eqnarray*}
\bS (u) &=&354 \frac{u^5+647u^4+931u^3+597u^2+73u+361}{u^5+832u^4+811u^3+215u^2+420u},\\
\bP(u)&=&50\frac{u^5+262u^4+812u^3+770u^2+868u+314}{u^5+832u^4+811u^3+215u^2
+420u},\\
\bR(u)&=&304\frac{u^6+437u^5+623u^4+64u^3+194u^2+3u+511}{u^8+239u^7+
983u^6+800u^5+214u^4+489u^3+191u^2},\\
\bT(u)&=&678\frac{u^6+697u^5+263u^4+895u^3+859u^2+204u+130}{u^8+239u^7+983u^6
+800u^5+214u^4+489u^3+191u^2}.
\end{eqnarray*}
We note that the fraction $\bQ(u)$ introduced in Section~\ref{sec:alfo}
is \[\bQ = h_C \ttimes (\bT^2+ \bR^2\ttimes \bP+\bS\ttimes \bR\ttimes \bT).\]
We now explain how these rational fractions were computed. We consider the formal point
\[P(t)= (u(t),v(t))=(832+t,361 + 10t + 14t^2+O(t^3))\] on $C$.
 We compute $G(P(t)) = \{Q_1(t),Q_2(t)\}$ and find 
\begin{eqnarray*}
Q_1(t)&=&(x_1(t),y_1(t))=(973 + 889t + 57t^2+O(t^3), 45 + 209t + 39t^2+O(t^3)),\\
Q_2(t)&=&(x_2(t),y_2(t))=(946 + 897t + 252t^2+O(t^3), 911 + 973t + 734t^2+O(t^3)).
\end{eqnarray*}
Using  Equation~(\ref{eq:edo}) we deduce the values \[m_{1,1}= 186, m_{1,2}=864, m_{2,1}=853, m_{2,2}=640.\]
Using Equation~(\ref{eq:edo}) again we increase the accuracy in the expansions for $x_1(t)$, $x_2(t)$, $y_1(t)$,
and $y_2(t)$ then deduce the rational fractions $\bS$, $\bP$, $\bR$, and $\bT$.

\bibliographystyle{plain}

\end{document}